\documentclass[colorinlistoftodos,11pt,a4paper,reqno]{article}
\usepackage{graphicx}
\usepackage[margin=3cm,includehead]{geometry}

\usepackage{amssymb}
\usepackage{amsmath}
\usepackage{stmaryrd}
\usepackage{latexsym}
\usepackage{amscd}
\usepackage{amsthm}
\usepackage{mathrsfs}
\usepackage{url}
\usepackage{amsmath,mathtools}
\usepackage[T1]{fontenc}
\usepackage{tikz-cd}
\usepackage{titlesec}
\usepackage[titletoc,toc,title]{appendix}
\graphicspath{}
\usepackage{fancyhdr} 
\usepackage{esint}

\newcommand\shorttitle{A Tentacle Example} 
\newcommand\authors{Romy Marie Merkel} 

\titleformat{\section}  
{\normalfont\large\bfseries\center}{\thesection.}{1em}{}

\makeatletter
\newcommand*{\rom}[1]{\expandafter\@slowromancap\romannumeral #1@}
\makeatother

\usepackage{thmtools}
\newtheorem{satz}{Theorem}
\newtheorem*{satz*}{Theorem}

\newtheorem{lemma}[satz]{Lemma}

\theoremstyle{definition}

\newtheorem*{ack}{Acknowledgments}
\newcommand{\defwort}[1]{\textbf{#1}}

\usepackage[english]{babel}
\addto\extrasenglish{

}

\usepackage{enumitem}

\usepackage[
style=alphabetic,
natbib=true, 
hyperref=true 
]{biblatex} 

\bibliography{tentacle.bib}

\usepackage{amsmath}
\usepackage{upgreek}
\usepackage{commath}
\usepackage{amssymb}
\usepackage{amsthm}
\usepackage{csquotes}
\usepackage{mathtools}
\allowdisplaybreaks

\usepackage{graphicx,tikz,tabularx} 
\usepackage[draft=false,babel,tracking=true,kerning=true,spacing=true]{microtype} 

\usetikzlibrary{calc}

\usetikzlibrary{cd}

\DeclareMathAlphabet{\mathbbmsl}{U}{bbm}{m}{sl}

\newcommand\vol{\text{\normalfont vol}}

\newcommand\re{\text{\normalfont Re}}

\newcommand\R{\mathbb{R}}
\newcommand\C{\mathbb{C}}

\newcommand\N{\mathbb{N}}

\newcommand\Tr{\text{Tr}\,}

\newcommand{\image}{\mathrm{im}}
\newcommand{\eucl}{\mathrm{Eucl}}
\newcommand{\m}{\ensuremath{\mathfrak{m}}}
\DeclareMathOperator{\area}{Area}
\newcommand{\partialbar}{\bar{\partial}}
\newcommand{\D}{\, \mathrm{d}}
\newcommand{\mass}{\hat{\m}}

\newcommand{\diam}{\mathrm{diam}}
\newcommand{\dis}{\mathrm{dis} \, }

\newcommand\Thm{Thm.~}

\newcommand\Def{Def.~}

\newcommand\eg{e.g., }
\newcommand\cf{cf.~}

\DeclareMathSymbol{\shortminus}{\mathbin}{AMSa}{"39}

\usepackage{hhline}

\usepackage{todonotes}
\setuptodonotes{color=red!10}

\usepackage[svgnames, x11names]{xcolor}
\usepackage[hypertexnames=false]{hyperref}
\hypersetup{
	linktocpage=true,
	colorlinks = true,
	linkcolor = Red2,
	anchorcolor = black,
	citecolor = [RGB]{24, 40, 219},
	filecolor = black,
	urlcolor = black
}

\colorlet{RED2}{Red2}

\begin{document}

	\title{A tentacle example for the stability of the positive mass theorem for Kähler manifolds}
	\author{Romy Marie Merkel}
%	\date{\today}
	\date{September 26, 2026}
	\maketitle

\begin{abstract}
	\noindent We construct a sequence of AE Kähler metrics on $\C^2$ with nonnegative and integrable scalar curvature whose ADM mass tends to zero but which, for a suitable choice of basepoints, fails to converge to Euclidean space in the pointed Gromov--Hausdorff sense. 
	This shows that, 
	without any additional assumptions, the excision in Klemmensen's stability result for the positive mass theorem for Kähler manifolds is crucial, which aligns with the findings in the ($3$-dimensional) Riemannian case. 
\end{abstract}

	\section{Introduction}
	
	A complete connected non-compact Riemannian manifold $(M^n,g)$ of dimension $n \geq 3$
	is called \defwort{asymptotically Euclidean} (AE) if there exists a compact subset $K \subset M$ whose complement $M \setminus K$
	 consists of finitely many components $\{M_l\}_{l=1}^L$, each of which is diffeomorphic to 
	$\R^n \setminus \overline{{B}_1(0)}$ and can be equipped with coordinates $(x_1, \dots, x_n)$ such that 
	\begin{align*}
		\big\lvert \partial^I (g_{ij} - \delta_{ij}) \big\rvert 
		= \mathcal{O}\bigl(\lvert x \rvert^{1 - \frac{n}{2}  - \lvert I \rvert - \delta}\bigr)
	\end{align*}
	for all multi-indices $I$ of order $\lvert I \rvert \in \{0, 1, 2\}$ and some $\delta > 0$. We call $\{M_l\}_{l=1}^L$ the \defwort{ends} of $M$. 
	
	At a given end, the \defwort{ADM mass} of $(M^n,g)$ is defined as 
		\begin{align*}
		\m(g) \coloneqq \lim_{\rho \rightarrow \infty} \frac{\Gamma(\frac{n}{2})}{4(n-1)\pi^{n/2}} \int_{S_\rho^{n-1}} (g_{kl, k} - g_{kk,l}) \nu^l \D A_\eucl,
	\end{align*}
	where
	$\nu$ is the outward-pointing Euclidean unit normal to the Euclidean sphere $S_\rho^{n-1} \subset \R^n$ of radius $\rho$.
	\citeauthor{Bartnik86} \autocite{Bartnik86} and \citeauthor{Chrusciel86} \autocite{Chrusciel86} independently proved that this term is finite and independent of the choice of asymptotic coordinates, provided that the scalar curvature $R$ is integrable.

The positive mass theorem states that if $R$ is integrable and nonnegative, then $\m(g)$ is nonnegative for every end, and equals zero for some end if and only if $(M^n,g)$ is isometric to Euclidean space. 
It was first proved by Schoen and Yau \autocite{SY79} for $n= 3$ and by Witten \autocite{Witten81} for $M$ spin. Since then, there have been a number of proofs using different techniques
(see, e.g., \autocite{Brendle26} and the references therein).

The rigidity statement naturally raises the question of stability: Does small mass imply that $(M^n,g)$ is close to Euclidean space in some sense? 
\citeauthor{LS14} \autocite{LS14} showed that this fails in the pointed Gromov--Hausdorff sense, even under rotational symmetry. For all $n \geq 3$, they constructed a sequence of rotationally symmetric AE $n$-manifolds $((M_k^n, g_k))_k$ with nonnegative scalar curvature whose masses tend to zero, but which develop arbitrarily deep gravitational wells that prevent convergence to Euclidean space.
However, the Bray--Kazaras--Khuri--Stern integral inequality \autocite{BKKS22} opened up a new line of investigation.
They showed that the mass of an AE $3$-manifold can be bounded from below by an integral involving the scalar curvature and the Hessian of certain harmonic functions. 
Building on this, Dong and Song \autocite{DS25} proved a stability result that accounts for gravitational wells, but which is available only in dimension $3$ because of the argument used in \autocite{BKKS22}:
If the mass of a sequence of AE $3$-manifolds with nonnegative scalar curvature tends to zero, then there exists a sequence of respective subsets such that their boundary areas tend to zero and their complements converge to Euclidean $3$-space in the pointed Gromov--Hausdorff sense. 
See also \autocite{Sormani26} for a survey of stability results in dimension $3$.

In this paper, we focus on the Kähler case, where
Hein and LeBrun \autocite{HL16} gave a proof of the positive mass theorem based on an explicit formula for the mass. Along the way, they showed that an AE Kähler manifold has only one end, which justifies speaking of \textit{the} mass without specifying an end. 
Their formula depends only on the underlying smooth manifold, the first Chern class of the complex structure and the Kähler class of the metric.

Inspired by \autocite{BKKS22}, Klemmensen \autocite{JohanPMT} showed that the mass of an AE Kähler manifold can be bounded from below by an integral of the scalar curvature and the Hessian of certain holomorphic coordinate functions coming from the complex coordinates at infinity. In contrast to the Riemannian case, this integral inequality holds in all complex dimensions. 
He used this to prove two stability results for the positive mass theorem for Kähler manifolds. While one of them requires a lower bound on the Ricci curvature, the other one applies to more general sequences but requires the excision of sets whose boundaries vanish in the limit:

	\begin{satz}[{\autocite[\Thm B]{JohanPMT}}]\label{thm:Johan}
		Let $((X_k^{2m},g_k, J_k))_{k \in \N}$ be a sequence of AE Kähler manifolds of complex dimension $m$ with nonnegative and integrable scalar curvature, and suppose that the ADM masses satisfy $\m(g_k) \to 0$ as $k\rightarrow \infty$. Then there exist domains $Z_k\subset X_k$, $k \in \N$, that satisfy the following two properties:
		First, for any continuous function $\xi: (0,\infty) \to (0,\infty)$ with $\lim_{x\rightarrow 0^+} \xi(x) = 0$, there exists $k_\xi \in \N$ such that 
		\begin{align*}
			\area_{g_k}(\partial Z_k) \leq \frac{\m(g_k)^{\frac{m}{2m-2}+\frac{1}{2}}}{\xi(\m(g_k))}
			\qquad \text{ for all } k \geq k_\xi.
		\end{align*}
		Second, the sequence
		$(X_k\setminus Z_k)_k$ converges to $(\R^{2m},g_{\eucl})$ in the pointed Gromov--Hausdorff sense,
		\begin{align*}
			(X_k\setminus Z_k,\hat d_{g_k},p_k) \, \overset{\mathrm{pGH}}{\longrightarrow} \, (\R^{2m}, d_{\eucl},0),
		\end{align*}
		where $p_k\in X_k\setminus Z_k$ is any choice of basepoint and $\hat d_{g_k}$ is the induced length metric of $g_k$ on $X_k\setminus Z_k$. 
	\end{satz}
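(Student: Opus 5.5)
This is Klemmensen's theorem, which the paper cites; I would reproduce its proof following the Dong--Song strategy. The engine is Klemmensen's integral inequality. Let $z_1,\dots,z_m$ be holomorphic functions on $X_k$ asymptotic to the complex coordinates at infinity; they exist by solving $\bar\partial$ with decay, as in \autocite{HL16}. Write $u = x_1 = \re z_1$, which is pluriharmonic and in particular harmonic. For such $u$ one has a lower bound of the schematic form
\begin{align*}
	\m(g_k) \geq c_m \int_{X_k} \Big( \frac{\lvert \nabla^2 u \rvert^2}{\lvert \nabla u\rvert} + R_{g_k} \lvert \nabla u \rvert \Big) \D V_{g_k},
\end{align*}
or a variant in which every coordinate function $x_i$ contributes. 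The Kähler structure is what makes this available in all dimensions: Bochner's formula for holomorphic functions gives $\Delta \lvert \partial z \rvert^2 \geq \lvert \nabla \partial z\rvert^2$ up to a Ricci term, and that Ricci term integrates against the scalar curvature. I would take this inequality as the starting point. Since $R\geq0$, it controls $\int \lvert\nabla^2 x_i\rvert^2/\lvert\nabla x_i\rvert$ by $\m(g_k)$ for every $i$.

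Next I construct $Z_k$ by excision. Consider the map $\Phi_k=(x_1,\dots,x_{2m}) : X_k\to\R^{2m}$. Where the Hessians are small, $d\Phi_k$ is nearly constant along paths. Since $d\Phi_k$ is asymptotically orthonormal, it should then be close to an isometry. I would set $Z_k$ to be the set where some $\lvert \nabla x_i\rvert$, or the Gram matrix $\langle\nabla x_i,\nabla x_j\rangle$, deviates from $\delta_{ij}$ by more than a threshold $\varepsilon_k$. To control the boundary area, I would use the coarea formula applied to $f=\lvert\langle \nabla x_i,\nabla x_j\rangle-\delta_{ij}\rvert$, whose gradient is bounded by Hessian terms. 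For suitable $t\in(\varepsilon_k,2\varepsilon_k)$ this gives
\begin{align*}
	\area(\{f=t\}) \lesssim \varepsilon_k^{-1}\int \lvert\nabla f\rvert .
\end{align*}
Cauchy--Schwarz bounds the right-hand side by $\m^{1/2}\,(\int\lvert\nabla u\rvert)^{1/2}$ restricted to a region. A Sobolev or isoperimetric-type bound then limits the volume of the bad region in terms of $\m$ to the power $\frac{m}{m-1}$; this uses that an AE Kähler manifold with small mass has volume growth close to Euclidean, via the Hein--LeBrun mass formula. Together these should produce the exponent $\frac{m}{2m-2}+\frac12$. Choosing $\varepsilon_k\to0$ slowly, tuned to the arbitrary function $\xi$, gives the area bound eventually. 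I would also discard bounded components of the complement so that $X_k\setminus Z_k$ is connected and contains the end.

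Finally, on $X_k\setminus Z_k$ with the induced length metric, $\Phi_k$ is $(1\pm C\varepsilon_k)$-bi-Lipschitz along curves, so it is a length-nonincreasing-up-to-error map into $\R^{2m}$. For pointed GH convergence I need two further facts. First, surjectivity up to small error on balls $B_R(\Phi_k(p_k))$: by degree theory, $\Phi_k$ has degree one at infinity, and its image misses only the images of the small-boundary holes. Second, the reverse distance inequality $\hat d\le \lvert\Phi_k(p)-\Phi_k(q)\rvert+o(1)$: take the straight segment in $\R^{2m}$, lift it, and detour around the excised sets, which costs little because their boundaries have small area. I expect this reverse inequality to be the main obstacle, since small area does not immediately give small diameter or small detours. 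I would first try to handle it with an averaging argument: almost every parallel segment in a thin tube around the straight segment avoids $\Phi_k(Z_k)$, because the projected area of $\Phi_k(\partial Z_k)$ is tiny. The last step is to check that the basepoint choice is arbitrary, which follows because the estimates are uniform.
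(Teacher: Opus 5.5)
The paper does not prove this statement. It is quoted from \autocite[\Thm B]{JohanPMT}, and only its conclusions are tested, heuristically, on the example in \autoref{sec:excision}, so I can only judge your outline on its own merits. The architecture is the natural one, and you locate the exponent correctly: $\m^{1/2}$ comes from Cauchy--Schwarz against the Hessian bound, times the square root of a volume bound of order $\m^{m/(m-1)}$.

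That volume bound, however, fails as you justify it. There is no $k$-independent Sobolev or isoperimetric inequality on $(X_k,g_k)$. Small mass also does not give near-Euclidean volume growth at finite scales; the Hein--LeBrun formula expresses $\m$ only through $\int R\,d\mu$ and cohomological data.

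The sequence of \autoref{thm:tentacleex} is a counterexample.
\begin{itemize}
\item By \autoref{lemma:limitfk}, $B^{g_k}_1(0)\subset\{\lvert z\rvert<e^{t_k^{(1/2)}/2}\}$ for large $k$.
\item This set has $g_k$-volume $2\pi^2\phi_k(t_k^{(1/2)})^2=\tfrac{\pi^2}{2}\mass_k^2\to0$.
\item Its boundary has $g_k$-area $4\pi^2\sqrt{\mass_k D_k}\sim 2\pi^3\mass_k^{3/2}/\lvert t_{0,k}\rvert$.
\end{itemize}
So unit balls collapse, which no uniform Sobolev inequality allows, and the isoperimetric ratio decays like $e^{-k}$.

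What is missing is a volume estimate that uses only what you control on the transition region $\{\varepsilon<f<2\varepsilon\}$. One way to get it:
\begin{itemize}
\item $\Phi_k$ is a proper holomorphic map of degree one, so positivity of local degrees makes it injective on the set where $d\Phi_k$ is invertible. That set contains $\{f<c_m\}$ for a dimensional constant $c_m>0$.
\item Hence $\min\{(f-\varepsilon)_+,\varepsilon\}$ descends to a compactly supported Lipschitz function on $\C^m$.
\item Its Dirichlet energy is at most $(1+C\varepsilon)\int_{\{\varepsilon<f<2\varepsilon\}}\lvert\nabla f\rvert^2\lesssim\m$, because $\Phi_k$ is almost isometric on that region.
\item The Euclidean Sobolev inequality then bounds the Euclidean volume, and hence the $g_k$-volume, of $\{\tfrac32\varepsilon\le f\le2\varepsilon\}$ by $C(\m/\varepsilon^2)^{m/(m-1)}$.
\end{itemize}
Your lifting of segments in the last step needs the same injectivity. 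Degree one at infinity alone does not stop a lift from ending at a different preimage of its endpoint.

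Two smaller points. First, as written, the $Z_k$ are fixed before $\xi$ is chosen. The first property is then equivalent to $\area_{g_k}(\partial Z_k)=O\bigl(\m(g_k)^{(2m-1)/(2m-2)}\bigr)$, since $\frac{m}{2m-2}+\frac12=\frac{2m-1}{2m-2}$; this is how \autoref{sec:excision} uses it. Choosing $\varepsilon_k\to0$ tuned to $\xi$ makes $Z_k$ depend on $\xi$. A bound of the form $C\varepsilon_k^{-a}\m(g_k)^{(2m-1)/(2m-2)}$ with $a>0$ and $\varepsilon_k\to0$ does not give the property for every $\xi$. Keep the threshold fixed instead. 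The argument above shows that, for each fixed $\eta$, the part of $X_k\setminus Z_k$ where $f>\eta$ has volume $\lesssim(\m/\eta^2)^{m/(m-1)}$, and your averaging over parallel segments can exploit exactly this.

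Second, your heuristic for the integral inequality is off. Bochner's formula for a single holomorphic function produces $\mathrm{Ric}(\nabla^{1,0}z,\overline{\nabla^{1,0}z})$, which $R\geq0$ does not control. Since you take the inequality as a black box this is not fatal, but the per-coordinate Hessian bound you extract must be the form actually proved in \autocite{JohanPMT}.
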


	They subsequently constructed three sequences of AE Kähler manifolds whose masses converge to zero to which this stability result applies. These are given by AE Kähler metrics on $(\C^2, J_0)$ with nonnegative and integrable scalar curvatures, where $J_0$ denotes the natural complex structure. However, all of them converge to $(\R^4, d_{\eucl})$ in the pointed Gromov--Hausdorff sense, even though two of them develop a singularity at $0 \in \C^2$. Hence, no gravitational wells appear in the limits, and so the question remained open whether there existed any \enquote{tentacle examples} which do develop such a well and for which the excision in the above statement is crucial. 
	
	In this work, we construct such an example: 
	
	\begin{satz}\label{thm:tentacleex}
		There exists a sequence $(g_{k})_{k}$ of radially symmetric AE Kähler metrics on $(\C^2, J_0)$ with nonnegative and integrable scalar curvature which satisfies $\m(g_k) \rightarrow 0$
		but which, for a suitable choice of basepoints $p_k \in \C^2$, fails to converge to Euclidean space in the pointed Gromov--Hausdorff sense.
	\end{satz}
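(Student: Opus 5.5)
We would work with U(2)-invariant Kähler metrics on $(\C^2,J_0)$ given by a radial Kähler potential $\omega = i\partial\bar\partial \phi(t)$ with $t=\log|z|^2$, in the Calabi ansatz. Set $\tau=\phi'(t)$, which plays the role of the moment map, and $\varphi(\tau)=\phi''(t)$, the momentum profile. The metric is then determined by $\varphi$ on an interval $[0,\infty)$. Smoothness at the origin amounts to $\varphi(0)=0$ and $\varphi'(0)=1$, and the AE condition requires $\varphi(\tau)=\tau+c+O(\tau^{-\epsilon})$, or an equivalent expansion. In these coordinates the scalar curvature of a U(2)-invariant Kähler surface has the known closed form
\[
R = -\frac{1}{\tau}\bigl(\tau\,\varphi(\tau)\bigr)''\cdot\text{const} + \frac{\text{const}}{\tau}.
\]
Concretely, $R\ge0$ is equivalent to a one-sided differential inequality on $\varphi$, roughly $(\tau\varphi)''\le 2$ up to normalization. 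By Hein--LeBrun, the mass is an explicit multiple of the constant term $c$ in the expansion of $\varphi$, or equivalently of $\int R\,dV$. The first step is to fix these normalizations and verify them on flat space ($\varphi=\tau$) and on the Burns metric.

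The second step is the construction, modelled on the Lee--Sormani deep wells. We choose $\varphi_k$ to equal $\tau$ near $0$, flat near the origin. On an intermediate interval $[a_k,b_k]$ we make $\varphi_k$ very small, so that the $S^1$/radial direction is long while the $\C P^1$ sizes, governed by $\tau$, stay moderate. This produces a long tube, or tentacle, whose $g$-length is $\int d\tau/\sqrt{\varphi}\cdot$const. Past the tube, $\varphi_k$ returns to $\tau+c_k$ with $c_k\to0$. The profile is built piecewise: on pieces where $(\tau\varphi)''\le 2$ the function $\tau\varphi$ lies below a parabola, so it is a concave perturbation. We then smooth it by mollification, which preserves the inequality because the condition is linear in $\tau\varphi$ and stable under convolution. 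The length of the tube is forced to be at least $L_0>0$ fixed, while $c_k\to0$.

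The third step verifies the failure of pGH convergence. The basepoints $p_k$ are taken at the origin, the bottom of the well. The flat region around $p_k$ then has ball of radius comparable to $L_0$ lying inside the tentacle, with volume tending to zero or with the wrong topology/diameter growth. For instance, the $\C P^1$ fibres and the Hopf circles have size $\sim\sqrt{\tau}$, which we arrange to shrink while the radial distance stays $\ge L_0$. A Euclidean ball of radius $L_0/2$ cannot be GH-close to such a thin region: the balls $B(p_k,r)$ have diameter $2r$ in Euclidean space but small "width", so we can exhibit many points at pairwise distance $\ge r$ in the Euclidean ball with no counterpart.

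The main obstacle is reconciling three requirements: $R\ge0$ (the inequality $(\tau\varphi)''\le2$), a long thin region, and a small constant term $c_k$. Since $\tau\varphi$ must be a subparabolic function starting at $\tau^2$ and ending near $\tau^2+c\tau$, a dip has to be compensated. We would first try putting the well near small $\tau$: take $\varphi_k=\tau$ on $[0,\epsilon_k]$, then let $\tau\varphi$ follow a nearly linear segment, which is allowed since concave pieces are cheap. We then rejoin $\tau^2+c_k\tau$ with $c_k\approx\epsilon_k$ small, and check that $\int d\tau/\sqrt{\varphi}$ over the dip stays bounded below. If the fixed-origin choice cannot produce a deep well, we would instead place the well at large $\tau$ and choose $p_k$ there.
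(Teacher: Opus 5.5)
Your route is genuinely different from the paper's. Once the points in the second paragraph are fixed, it works and is simpler. In the paper's notation your $\tau$ is $\phi$ and your $\varphi(\tau)$ is $\phi'$. So $A=2\phi-\phi'-\phi\phi''/\phi'$ becomes $A=2\tau-(\tau\varphi)'$, and $R=\frac{2}{\tau}\bigl(2-(\tau\varphi)''\bigr)$, which is your criterion. The paper prescribes $A$ as a function of $t$, so it must solve a nonlinear ODE for $\phi$. That requires explicit solutions where $A$ is constant, a bootstrap through the transition interval, and an intermediate value argument in $D$ to reach $\phi=\phi'$ at $t_0-\varepsilon$, which is the smoothness condition at the origin. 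In your variable nothing needs to be solved beyond a quadrature. For any smooth nondecreasing $A(\tau)$ with $A=0$ near $0$ and $A=\mass$ for large $\tau$, one has $\tau\varphi=\tau^2-\int_0^\tau A$. Smoothness at the origin ($\varphi=\tau$ near $0$) and the AE form $\varphi=\tau-\mass+C/\tau$ are then automatic. The only remaining condition is $\int_0^\tau A<\tau^2$ for $\tau>0$. Mollifying $A$ rather than $\tau\varphi$ is the clean way to do your smoothing step. The paper's metrics are the special case of a single smoothed step of $A$ near $\tau\approx\mass/4$. Its $t$-picture gives extra explicit information, such as $D\sim\mass^2\pi^2/(4t_0^2)$, that the theorem does not need. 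For the last step, your packing argument based on thin cross-sections suffices; the paper proves more, namely convergence to a half-line.

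As written, however, you assert the decisive estimate rather than prove it, and several statements are wrong.

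\emph{Signs.} Since $A$ increases from $0$ to $\mass$, the constant term is $c=-\mass<0$. Beyond the dip, $\tau\varphi=\tau^2-\mass\tau+C$ with $C=\int_0^{\tau_2}(\mass-A)$, and positivity forces $C>\mass^2/4$. So the profile can never rejoin $\tau^2+c_k\tau$.

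\emph{Location of the dip.} Since $\tau\varphi\ge\tau^2-\mass\tau$, you have $\varphi\ge\tau-\mass$, so $\varphi$ can only be small where $\tau\lesssim\mass$. Hence the $\C P^1$'s necessarily shrink, and your fallback of a well at large $\tau$ is impossible.

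\emph{Depth of the dip.} Where $\tau\le\mass$ and $\tau\varphi\ge\eta$, the length element $\frac{1}{\sqrt2}\sqrt{\tau/(\tau\varphi)}\,\mathrm{d}\tau$ integrates to at most $\mass^{3/2}/\sqrt{2\eta}$. So the minimum $\eta$ of $\tau\varphi$ must satisfy $\eta\lesssim\mass^3/L_0^2$. A horizontal segment at height $\epsilon_k^2$, where your flat part ends, with $\mass\approx\epsilon_k$, gives a tube of length only $O(\sqrt{\epsilon_k})$.

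\emph{A choice that works.} Take
\[
A=0 \text{ on } [0,\tau_a),\quad A=a \text{ on } [\tau_a,a/2] \ (2\tau_a\le a<4\tau_a),\quad A=2\tau \text{ on } [a/2,\mass/2],\quad A=\mass \text{ afterwards},
\]
and mollify at a scale much smaller than $\eta/\mass$. This gives $\tau\varphi\equiv\eta:=a(\tau_a-a/4)$ on $[a/2,\mass/2]$, and $\tau\varphi=(\tau-\mass/2)^2+\eta$ beyond, which is exactly the paper's $F_D$ with $D=\eta$. Take $\mass=1/k$, $\tau_a=\mass/8$, and $a$ with $\eta=\mass^4$ (possible for $k\ge 8$). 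The flat bottom then has length at least $\mass^{-1/2}/(8\sqrt2)\to\infty$. Since $\varphi\le\tau\le\mass/2$ there, the paper's sphere-diameter bound gives cross-sections of diameter $O(\sqrt{\mass})$. By contrast, the paper's dip has a parabolic bottom of length only of order $\sqrt{\mass}\log(\mass/\sqrt{D})$. That is why it needs $D$ exponentially small, via $t_0=-e^{k}$.
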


	We break the construction of this sequence into several steps. 
	In \autoref{sec:CalabiHeinLeBrun}, we combine the Calabi ansatz with Hein--LeBrun's formula for the ADM mass to reduce the problem to a nonlinear ODE for the Kähler potential. This ODE is autonomous on $\R \setminus [t_0 - \varepsilon, t_0 + \varepsilon]$ for some $t_0 \in \R$ and $\varepsilon > 0$, and depends on the rescaled mass $\mass$. In the subsequent three sections, we derive suitable solutions on $[t_0 + \varepsilon, \infty)$, apply standard ODE methods to ensure that the required conditions remain satisfied on the transition interval $[t_0 - \varepsilon, t_0 + \varepsilon]$ for $\varepsilon > 0$ sufficiently small, and then show that there exists a solution among these which extends globally and satisfies the desired conditions beyond $t_0 - \varepsilon$. 
	In \autoref{sec:ProofThm2}, we set $\mass = \frac{1}{k}$ for $k \in \N$ and 
	pick a suitable $t_0 = t_0(\mass)$ such that the corresponding metrics develop a gravitational well at $0 \in \C^2$ 
	and converge to the half-line $[0, \infty)$ instead of Euclidean space if we take all basepoints to be the origin. 
	Finally, we give a heuristic description of the sets that must be removed for this sequence to converge to Euclidean space.

	\begin{ack}
		I am grateful to my advisor Hans-Joachim Hein for introducing me to this question and for his support. I also thank him and Johan Jacoby Klemmensen for suggesting the general idea behind the construction presented here.

		The project was funded by the Deutsche Forschungsgemeinschaft (DFG, German Research Foundation) under Germany's Excellence Strategy EXC 2044--390685587, Mathematics Münster: Dynamics--Geo\-me\-try--Structure and by the CRC 1442 \enquote{Geometry: Deformations and Rigidity} of the DFG. 
	\end{ack}

	\section{The Calabi ansatz and Hein--LeBrun's mass formula}\label{sec:CalabiHeinLeBrun}
	
	Following Calabi's method, we focus on radially symmetric Kähler metrics $\omega = i \partial \partialbar (u(t))$ on $(\C^2, J_0)$ with Kähler potential $u(t)$ for $t = \log r^2$ and $r^2 = \lvert z_1\rvert^2 + \lvert z_2 \rvert^2$. Define $\phi \coloneqq u^\prime$.
Then the metric can be written as
\begin{align}\label{eq:coeffgk}
	g_{\alpha \bar{\beta}} =  \partial_\alpha \partial_{\bar{\beta}}( u(t) ) =  e^{-t}\phi  \delta_{\alpha \beta} + e^{-2t} (\phi^\prime - \phi) \bar{z}_\alpha z_\beta , \qquad  \alpha, \beta \in \{1,2\},
\end{align}
and satisfies 
\begin{align*}
	\Tr (g_{\alpha \bar{\beta}} )_{\alpha, \beta } = e^{-t}(\phi^\prime + \phi)
	\qquad \text{ and } \qquad 
	\det (g_{\alpha \bar{\beta}} )_{\alpha, \beta } = e^{-2t} \phi \phi^\prime.
\end{align*}
Thus, the Kähler condition is equivalent to 
\begin{align*}
	\phi = u^\prime > 0 \qquad \text{ and } \qquad \phi^\prime = u^{\prime \prime} > 0.
\end{align*}
Furthermore, standard computations (compare, e.g., \autocite{FYZ16}) reveal that 
 the scalar curvature takes the form
\begin{align*}
	R = - 2 g^{\gamma \bar{\delta}} \partial_\gamma \partial_{\bar{\delta}} \log  \det (g_{\alpha \bar{\beta}} )_{\alpha, \beta } 
	= \frac{2}{\phi \phi^\prime} \biggl(2 \phi - \phi^\prime - \frac{\phi \phi^{\prime \prime}}{\phi^{\prime}}\biggr)^\prime \bigg\vert_{t = \log r^2}.
\end{align*}
Under the assumption that $\omega$ is AE and that $R$ is integrable, we can plug this into
Hein--LeBrun's formula for the ADM mass \autocite[\Thm C]{HL16} to obtain
\begin{align*}
	\m &= \frac{1}{12 \pi^2} \int_{\C^2} R \D\vol_g \\*
	&= \frac{2}{3 \pi^2} \int_{\C^2}  e^{-2t} \biggl(2 \phi - \phi^\prime - \frac{\phi \phi^{\prime \prime}}{\phi^{\prime}}\biggr)^\prime  \bigg\vert_{t = \log r^2} \D\vol_{\eucl} \\*
	&= \frac{\vol_{\eucl}(S^3)}{3 \pi^2}  \int_{-\infty}^\infty \biggl(2 \phi - \phi^\prime - \frac{\phi \phi^{\prime \prime}}{\phi^{\prime}}\biggr)^\prime (s) \D s.
\end{align*} 
For simplicity, we will work with the rescaled mass 
\begin{align*}
	\mass \coloneqq \frac{3 } {2} \m = \int_{-\infty}^\infty \biggl(2 \phi - \phi^\prime - \frac{\phi \phi^{\prime \prime}}{\phi^{\prime}}\biggr)^\prime (s) \D s
\end{align*}
in the following.

The scalar curvature is nonnegative if and only if the function
\begin{align*}
	A \coloneqq 2 \phi - \phi^\prime - \frac{\phi \phi^{\prime \prime}}{\phi^{\prime}}
\end{align*}
is nondecreasing because
\begin{align*}
	A^\prime \geq 0 \iff R = \frac{2 A^\prime}{\phi \phi^\prime} \geq 0.
\end{align*}
Moreover, if we assume that $\lim_{t \rightarrow - \infty} A(t) = 0$, then
$\mass$ can be directly recovered from $A$ via
\begin{align*}
	A(t) = \int^t_{-\infty}  \biggl(2 \phi - \phi^\prime - \frac{\phi \phi^{\prime \prime}}{\phi^{\prime}}\biggr)^\prime (s) \D s
	\rightarrow
	 \mass
	\qquad \text{ as } t \rightarrow \infty.
\end{align*}

This suggests to prescribe $A$ such that the above conditions are satisfied and then show that a suitable global solution $\phi$ to the corresponding ODE exists. 
For constant $A$, the ODE becomes autonomous and therefore considerably easier to analyze. However, $A$ needs to be smooth, so we cannot simply take the Heaviside function which jumps from $0$ to $\mass$ at some point $t_0 \in \R$. Instead, we consider a 1-parameter family of smooth step functions 
$\{A_\varepsilon: \R \rightarrow \R\}_{\varepsilon > 0}$ satisfying 
\begin{align*}
	A_\varepsilon^\prime \geq 0, 
	\qquad 
	A_\varepsilon \vert_{(-\infty, t_0 - \varepsilon]} = 0, 
	\qquad 
	A_\varepsilon \vert_{[t_0 + \varepsilon, \infty)} = \mass.
\end{align*}

Before we start to solve this ODE, let us work out the actual obstruction to global existence of a solution $\phi$ satisfying the Kähler conditions. 

\begin{lemma}\label{lemma:globalobstruction}
	Let $A \in C^\infty(\R)$ 
	and suppose that $\phi$ is a smooth solution to 
\begin{align*}
	2 \phi - \phi^\prime - \frac{\phi \phi^{\prime \prime}}{\phi^{\prime}} = A
\end{align*}
with $\phi, \phi^\prime \in (0, \infty)$ on the maximal interval $(T, \infty) \subset \R$. Then either $T = - \infty$ or $\lim_{t \rightarrow T^+} \phi(t) = 0$.
\end{lemma}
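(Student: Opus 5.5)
The plan is to rewrite the equation as a first-order system that is regular away from $\{\phi = 0\}$ and $\{\phi' = 0\}$. Then we show that if $T > -\infty$ and $\phi$ does not tend to $0$, the solution stays in a compact subset of the open region $\{\phi>0,\ \phi'>0\}$ as $t \to T^+$. Standard ODE extension then contradicts the maximality of $(T,\infty)$.

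\textbf{Reformulation.} Set $\psi \coloneqq \phi'$. On the region $\{\phi>0,\ \psi>0\}$ the equation $2\phi - \phi' - \phi\phi''/\phi' = A$ is equivalent to the smooth system
\begin{align*}
	\phi' = \psi, \qquad \psi' = \frac{\psi\,(2\phi - \psi - A)}{\phi}, \qquad\text{equivalently}\qquad (\log\psi)' = \frac{2\phi - A}{\phi} - \frac{\psi}{\phi}.
\end{align*}
Its right-hand side is smooth in $(t,\phi,\psi)$ on $\R\times(0,\infty)^2$. Hence a solution can be continued beyond a finite endpoint $T$ whenever $(\phi,\psi)(t)$ stays in a compact subset of $(0,\infty)^2$ as $t\to T^+$, and it then remains in the open region $\{\phi>0,\ \psi>0\}$ slightly beyond $T$.

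\textbf{Existence of the limit.} Assume $T>-\infty$ and fix $t_1>T$. Since $\phi'>0$, the function $\phi$ is increasing on $(T,t_1]$ and bounded below by $0$, so $L\coloneqq\lim_{t\to T^+}\phi(t)\ge 0$ exists. Suppose for contradiction that $L>0$. Then $\phi\in[L,\phi(t_1)]$ on $(T,t_1]$. Since $A$ is continuous, $\lvert A\rvert$ is bounded on $[T,t_1]$, so $\bigl\lvert(2\phi - A)/\phi\bigr\rvert\le C$ there for some constant $C$.

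\textbf{Two-sided bounds on $\psi$.} This is the only real step.
\begin{itemize}
	\item \emph{Lower bound.} Since $\psi/\phi>0$, we have $(\log\psi)'\le C$. Integrating from $t$ to $t_1$ gives $\psi(t)\ge\psi(t_1)e^{-C(t_1-T)}>0$.
	\item \emph{Upper bound.} We have $(\log\psi)'\ge -C-\psi/L$. Integrating from $t$ to $t_1$ gives
	\begin{align*}
		\log\psi(t)\le\log\psi(t_1)+C(t_1-T)+\frac{1}{L}\int_t^{t_1}\psi\,\D s.
	\end{align*}
	The last integral equals $\phi(t_1)-\phi(t)\le\phi(t_1)-L$, which is finite.
\end{itemize}
This upper bound is the one place where $L>0$ and the monotonicity of $\phi$ enter. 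It prevents $\phi'$ from blowing up as $t\to T^+$, and it is the crux of the argument.

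\textbf{Conclusion.} Both bounds show that $(\phi,\psi)$ stays in the compact set $[L,\phi(t_1)]\times[c,C']$ for some constants $0<c\le C'$. This set lies inside $(0,\infty)^2$. By the extension remark above, the solution extends smoothly to some $(T-\delta,\infty)$ with $\phi,\phi'>0$, contradicting the maximality of $T$. Hence $L=0$ whenever $T>-\infty$.
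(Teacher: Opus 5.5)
Your proof is correct and follows essentially the same route as the paper. Both integrate $(\log\phi')' = (2\phi - \phi' - A)/\phi$ back from a fixed point, use $\lim_{t\to T^+}\phi(t)>0$ and the boundedness of $A$ near $T$ to get two-sided bounds on $\phi'$, and then extend past $T$ to contradict maximality. The differences are minor: the paper integrates the term $\phi'/\phi=(\log\phi)'$ exactly, getting an explicit formula for $\phi'(t)$, while you bound it by $\phi'/L$ and use $\int_t^{t_1}\phi'=\phi(t_1)-\phi(t)$; you also state explicitly the compactness-based extension criterion that the paper leaves implicit.
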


\begin{proof}
	Integrating gives 
	\begin{align*}
		 &\multicolumn{3}{l}{$\displaystyle (\log \phi^\prime)^\prime = \frac{2 \phi - \phi^\prime - A}{\phi}$ } \\
		&\qquad \iff 
		&\log \frac{\phi^\prime(t)}{\phi^\prime(T+1)} 
		&= \int^t_{T+1} \frac{2 \phi - \phi^\prime - A}{\phi}(s) \D s \\
		&&&= 2\bigl(t- (T+1)\bigr) - \log \frac{\phi(t)}{\phi(T+1)} - \int^t_{T+1} \frac{A(s)}{\phi(s)} \D s 
		\\
		& \qquad \iff  &
		\phi^\prime(t) &= \phi^\prime(T+1) \frac{\phi(T+1)} {\phi(t)} e^{- 2(T+1 - t)} \exp \biggl(\int^{T+1}_t \frac{A(s)}{\phi(s)} \D s \biggr)
	\end{align*}
	for $t \in (T, \infty)$.
	Suppose $T > - \infty$ and $\lim_{t \rightarrow T^+} \phi(t) >0$.
	Then the right-hand side remains bounded within $(0, \infty)$ as $t\rightarrow T^+$, which implies that $ \lim_{t \rightarrow T^+} \phi^\prime(t) \in (0, \infty)$. On the other hand, $\phi^\prime >0$ prevents $\phi$ from blowing up as $t\rightarrow T^+$. Consequently, $\phi$ smoothly extends to a neighborhood of $T$ while the conditions remain satisfied. This contradicts maximality of $(T, \infty)$.
\end{proof}

\section{Solutions for \texorpdfstring{$A = \mass$}{}}

Fix $\mass > 0$ and $t_0 \in \R$. 
To begin with, we solve the ODE on $[t_0, \infty)$ for $A = \mass$.
Integrating gives
\begin{align*}
	2 \phi - \phi^\prime - \frac{\phi \phi^{\prime \prime}}{\phi^{\prime}} = \mass
	\iff \bigl(\phi^2 - \phi \phi^\prime - \mass \phi \bigr)^\prime = 0
	\iff \phi^\prime = \frac{F_D(\phi)}{\phi},
\end{align*}
where 
$F_D(\phi) = (\phi - \frac{\mass}{2})^2 + D$ for some constant $D \in \R$ (\cf \autocite{FYZ16}). 
This defines a Kähler metric if and only if
\begin{align*}
	\phi > 0 \qquad \text{ and } \qquad F_D(\phi) > 0.
\end{align*}
These conditions are automatically satisfied for $\phi(t_0) > 0$ and $D > 0$, in which case
\begin{align*}
	t + C 
	= \int \frac{\phi}{F_D(\phi)} \D \phi
	= \frac{1}{2} \log F_D(\phi) + \frac{\mass}{2 \sqrt{D}} \arctan\biggl(\frac{\phi - \frac{\mass}{2} }{\sqrt{D}}\biggr)
\end{align*}
for some constant $C \in \R$.

\begin{lemma}\label{lemma:phirproperties}
	The unique maximal function $\phi_{D} >0$ defined via
	\begin{align}\label{eq:implicitphir}
		t + \log \frac{1}{2} = \frac{1}{2} \log F_D(\phi_D) + \frac{\mass}{2 \sqrt{D}} \biggl(\arctan\biggl(\frac{\phi_D - \frac{\mass}{2} }{\sqrt{D}}\biggr) - \frac{\pi}{2}\biggr)
	\end{align}
	depends smoothly on $(D,t)$ on $\{D>0\}$ and is decreasing in $D$. Moreover, it satisfies the AE condition and is normalized so that $\phi_D = \frac{1}{2}  e^t + \mass +\mathcal{O}(e^{-t})$ as $t \rightarrow \infty$.
\end{lemma}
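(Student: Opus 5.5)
Let $G_D(\phi)$ denote the right-hand side of \eqref{eq:implicitphir}, regarded as a function of $(D,\phi)\in(0,\infty)\times(0,\infty)$. The plan is to show that $\phi\mapsto G_D(\phi)$ is a diffeomorphism from $(0,\infty)$ onto an interval $(T_D,\infty)$, and then to define $\phi_D(t)=G_D^{-1}(t+\log\frac12)$ for $t>T_D-\log\frac12$. The derivative is
\begin{align*}
\partial_\phi G_D(\phi)=\frac{\phi-\frac{\mass}{2}}{F_D(\phi)}+\frac{\mass}{2}\cdot\frac{1}{F_D(\phi)}=\frac{\phi}{F_D(\phi)}>0 ,
\end{align*}
so $G_D$ is strictly increasing. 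As $\phi\to\infty$ we have $G_D\to\infty$, since the log term diverges and the arctan term stays bounded. As $\phi\to0^+$, $G_D$ tends to a finite value. Hence the inverse is defined on $(T_D,\infty)$ with $T_D$ finite, and maximality together with $\phi_D\to0$ at the left end is consistent with Lemma~\ref{lemma:globalobstruction}. Since $G_D$ is smooth in $(D,\phi)$ on $\{D>0,\phi>0\}$ and $\partial_\phi G_D\neq0$, the implicit function theorem gives smooth dependence of $\phi_D$ on $(D,t)$, on the open set where it is defined. Differentiating the relation $G_D(\phi_D)=t+\log\frac12$ in $t$ gives back $\phi_D'=F_D(\phi_D)/\phi_D$. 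This confirms the ODE and gives $\phi_D,\phi_D'>0$.

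For monotonicity in $D$, implicit differentiation gives $\partial_D\phi_D=-\partial_DG_D/\partial_\phi G_D$, so it suffices to show $\partial_D G_D>0$. Write $x=\phi-\frac{\mass}{2}$ and $\theta=\arctan(x/\sqrt D)-\frac{\pi}{2}\in(-\pi,0)$. Then
\begin{align*}
\partial_D G_D=\frac{1}{2F_D}-\frac{\mass}{4D^{3/2}}\,\theta-\frac{\mass}{2\sqrt D}\cdot\frac{x}{2\sqrt D\,F_D}
=\frac{1}{2F_D}\Bigl(1-\frac{\mass x}{2D}\Bigr)-\frac{\mass\,\theta}{4D^{3/2}} .
\end{align*}
This is the step I expect to be the main obstacle. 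The sign is not obvious when $x>0$ is large, because then the first term can be negative. I would rewrite it using $u=x/\sqrt D$ and the identity $-\theta=\frac{\pi}{2}-\arctan u=\operatorname{arccot}u$, which reduces the claim to
\begin{align*}
\operatorname{arccot}u>\frac{u}{1+u^2}-\frac{2\sqrt D}{\mass(1+u^2)} .
\end{align*}
Next I would check that $h(u)=\operatorname{arccot}u-\frac{u}{1+u^2}$ satisfies $h'(u)=-\frac{2}{(1+u^2)^2}<0$ and $h(u)\to0$ as $u\to\infty$. Hence $h>0$, and the term $-\frac{2\sqrt D}{\mass(1+u^2)}$ only helps. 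So $\partial_DG_D>0$ and $\phi_D$ is decreasing in $D$.

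For the asymptotics, as $\phi\to\infty$ we have $\theta=-\arctan(\sqrt D/x)=-\frac{\sqrt D}{\phi}+\mathcal O(\phi^{-2})$, and $\frac12\log F_D(\phi)=\log\phi-\frac{\mass}{2\phi}+\mathcal O(\phi^{-2})$. Therefore
\begin{align*}
t+\log\tfrac12=\log\phi-\frac{\mass}{\phi}+\mathcal O(\phi^{-2}).
\end{align*}
Exponentiating gives $\frac12e^t=\phi\,e^{-\mass/\phi+\mathcal O(\phi^{-2})}=\phi-\mass+\mathcal O(\phi^{-1})$, hence $\phi_D=\frac12e^t+\mass+\mathcal O(e^{-t})$. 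The same expansion can be differentiated, using the ODE $\phi'=F_D(\phi)/\phi=\phi-\mass+\mathcal O(\phi^{-1})$. This yields $\phi_D'=\frac12e^t+\mathcal O(e^{-t})$, and similarly for $\phi_D''$ and $\phi_D'''$.

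Finally, I plug these expansions into \eqref{eq:coeffgk}. With $r^2=e^t$ we get $g_{\alpha\bar\beta}=\frac12\delta_{\alpha\beta}+\mass r^{-2}\delta_{\alpha\beta}-\mass r^{-4}\bar z_\alpha z_\beta+\mathcal O(r^{-4})$. The deviation from the Euclidean metric is therefore $\mathcal O(r^{-2})$, and each derivative gains one further power of $r^{-1}$. Since $1-\frac n2-\delta=-1-\delta$ with $n=4$, the decay $r^{-2}$ meets the AE requirement for any $\delta\in(0,1)$, possibly after rescaling the coordinates by $\sqrt2$.
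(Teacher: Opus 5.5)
Your proof is correct and follows the paper's route: the implicit function theorem via $\partial_\phi G=\phi/F_D(\phi)$, positivity of $\partial_D G$ reduced to a one-variable inequality in $u=(\phi-\frac{\mass}{2})/\sqrt{D}$, and the same log/arctan expansions pushed through the ODE into the metric coefficients. The only differences are small improvements. You describe the maximal positive branch globally as the inverse of the increasing map $G_D$ on $(0,\infty)$. You also split off the positive term $\frac{2\sqrt{D}}{\mass(1+u^2)}$, so only the $D$-independent function $\frac{\pi}{2}-\arctan u-\frac{u}{1+u^2}$ (with derivative $-2/(1+u^2)^2$ and limit $0$) needs analyzing, instead of the paper's critical-point argument at $s_0=-\frac{\mass}{2\sqrt{D}}$.
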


\begin{proof}
	The function
	\begin{align*}
		G(\phi, D, t) \coloneqq \frac{1}{2} \log F_D(\phi) + \frac{\mass}{2 \sqrt{D}} \biggl(\arctan\biggl(\frac{\phi - \frac{\mass}{2} }{\sqrt{D}}\biggr) - \frac{\pi}{2}\biggr) -  t - \log \frac{1}{2}
	\end{align*}
	is smooth on $\{D > 0\}$ and satisfies 
	\begin{align*}
		\frac{\partial G}{\partial \phi} = \frac{1}{2} \frac{2 (\phi - \frac{\mass}{2})}{F_D(\phi)} + \frac{\mass}{2 {D}} \frac{1}{1 + \Bigl(\frac{\phi - \frac{\mass}{2} }{\sqrt{D}}\Bigr)^2} 
		= \frac{\phi - \frac{\mass}{2}}{F_D(\phi)} + \frac{ \frac{\mass}{2}}{F_D(\phi)} 
		= \frac{\phi }{F_D(\phi)}.
	\end{align*}
	Therefore, given any $(\phi^*, D^*,t^*)$ with $\phi^* \neq 0$ satisfying \eqref{eq:implicitphir}, the implicit function theorem yields a unique function $\phi_D(t)$ with $\phi_{D^*}( t^*) = \phi^*$ which is smooth in $(D, t)$ and exists as long as $\phi_D(t) \neq 0$.
	We concentrate on the positive branch, that is, the case where $\phi^*, \phi_D$ and $\phi_D^\prime$ are positive, and $\image (\phi_D) = (0, \infty)$. 
	
	Since 
	\begin{align*}
		0 
		= \frac{d }{d D} G(\phi_D(t), D, t) = \frac{\partial G}{\partial \phi_D} \frac{\partial \phi_D}{\partial D} +  \frac{\partial G}{\partial D}
		\iff  \frac{\partial \phi_D}{\partial D} = - \biggl( \frac{\partial G}{\partial \phi_D}\biggr)^{-1} \frac{\partial G}{\partial D},
	\end{align*}
	$\frac{\partial \phi_D}{\partial D}$ has the opposite sign of $\frac{\partial G}{\partial D}$.
	Computing the latter and setting $s \coloneqq (\phi_D - \frac{\mass}{2}) / \sqrt{D}$ gives 
	\begin{align*}
		\frac{\partial G}{\partial D}
		&=\frac{1}{2 F_D(\phi_D)} 
		- \frac{\mass}{4 {D}^{3/2}} \biggl(\arctan\biggl(\frac{\phi_D - \frac{\mass}{2} }{\sqrt{D}}\biggr) - \frac{\pi}{2}\biggr) 
		- \frac{\mass}{4 {D}^2} \biggl(\phi_D - \frac{\mass}{2}\biggr)
		\frac{1}{1 + \Bigl(\frac{\phi_D - \frac{\mass}{2} }{\sqrt{D}}\Bigr)^2} \\
		&=\frac{1}{2D(1 + s^2)} 
		- \frac{\mass}{4 {D}^{3/2}} \biggl(\arctan s - \frac{\pi}{2}\biggr) 
		- \frac{\mass}{4 {D}^2} 
		\frac{\sqrt{D} s}{1 + s^2} \\
		&=\frac{\mass}{4 {D}^{3/2}} 
		\biggl(\frac{\frac{2 \sqrt{D}}{\mass} - s}{1 + s^2} 
		-\arctan s + \frac{\pi}{2}
		\biggr).
	\end{align*}
	This function has nonnegative limits as $s \rightarrow \pm \infty$, and is increasing left of $s_0 = - \frac{\mass}{2 \sqrt{D}}$ and decreasing right of it because
	\begin{align*}
		\frac{d}{ds} \biggl(\frac{\frac{2 \sqrt{D}}{\mass} -  s}{ s^2 +1} 
		- \arctan s + \frac{\pi}{2}
		\biggr)
		= \frac{- (s^2 + 1) - \bigl(\frac{2 \sqrt{D}}{\mass} -  s\bigr)2s}{( s^2 +1)^2} - \frac{1}{1 + s^2} 
		= -2 \frac{\frac{2 \sqrt{D}}{\mass}s + 1}{( s^2 +1)^2}.
	\end{align*}
	This forces the function to be positive everywhere, which implies $\frac{\partial G}{\partial D} > 0$ and $\frac{\partial \phi_D}{\partial D} < 0$. Consequently, $\phi_D$ is decreasing in $D$.

	In order to show that the corresponding metric satisfies the AE condition, it suffices to find $\delta > 0$ such that
	\begin{align*}
		\bigg\lvert \partial^I_z \partial^J_{\bar{z}} \biggl((g_D)_{\alpha \bar{\beta}} - \frac{1}{2}\delta_{\alpha \beta}\biggr) \bigg\rvert 
		= \mathcal{O}\bigl(e^{-t(1 + \lvert I \rvert + \lvert J \rvert  + \delta)/2}\bigr)
	\end{align*}
	for all $\alpha, \beta \in \{1, 2\}$ and all multi-indices $I,J$ with $\lvert I \rvert + \lvert J \rvert \in \{0, 1, 2\}$
	as $t \rightarrow \infty$. 
	
	As the second term in \eqref{eq:implicitphir} is bounded, $t \rightarrow \infty$ is equivalent to $\phi_D \rightarrow \infty$. In that case, 
	\begin{align*}
		\frac{1}{2} \log F_D(\phi_D) 
		&= \log \phi_D + \frac{1}{2} \log \biggl(\biggl(1 - \frac{\mass}{2 \phi_D}\biggr)^2 + \frac{D}{\phi_D^2}\biggr)
		=  \log \phi_D + \frac{1}{2} \biggl(- \frac{\mass}{\phi_D} + \mathcal{O}(\phi_D^{-2}) \biggr) \\*
		&=  \log \phi_D - \frac{\mass}{2\phi_D} + \mathcal{O}(\phi_D^{-2})		
	\end{align*}
	and 
	\begin{align*}
		\arctan\biggl(\frac{\phi_D - \frac{\mass}{2} }{\sqrt{D}}\biggr)
		&= \frac{\pi}{2} - \biggl(\frac{\phi_D - \frac{\mass}{2} }{\sqrt{D}}\biggr)^{-1} + \mathcal{O} \biggl(\biggl(\frac{\phi_D - \frac{\mass}{2} }{\sqrt{D}}\biggr)^{-3}\biggr) \\
		&= \frac{\pi}{2} - \frac{\sqrt{D}}{\phi_D}\biggl(1 - \frac{\mass}{2 \phi_D}\biggr)^{-1} + \mathcal{O} \biggl(\frac{1}{\phi_D^3}\biggl(1 - \frac{\mass}{2 \phi_D}\biggr)^{-3}\biggr) \\
		&= \frac{\pi}{2} - \frac{\sqrt{D}}{\phi_D}\bigl(1 +  \mathcal{O}(\phi_D^{-1})\bigr) + \mathcal{O} (\phi_D^{-3}) \\*
		&= \frac{\pi}{2} - \frac{\sqrt{D}}{\phi_D}+ \mathcal{O} (\phi_D^{-2})
	\end{align*}
	yield
	\begin{align*}
		t + \log \frac{1}{2} = \log \phi_D - \frac{\mass}{\phi_D} + \mathcal{O}(\phi_D^{-2}). 
	\end{align*}
	This implies 
	\begin{align*}
		\frac{1}{2} e^t &= \phi_D e^{- \mass / \phi_D} e^{\mathcal{O}(\phi_D^{-2})} 
		= \phi_D \biggl(1 - \frac{\mass}{\phi_D} + \mathcal{O}(\phi_D^{-2})\biggr)\bigl(1 + \mathcal{O}(\phi_D^{-2})\bigr)
		= \phi_D - \mass + \mathcal{O}(\phi_D^{-1}),
	\end{align*}
	hence
	\begin{align*}
		\phi_D &= \frac{1}{2}  e^t + \mass + \mathcal{O}(\phi_D^{-1}) = \frac{1}{2}  e^t + \mass + \mathcal{O}(e^{-t}), \\
		\phi_D^\prime   &
		= \phi_D - \mass + \frac{\frac{\mass^2}{4} + D}{\phi_D}
		=  \frac{1}{2} e^t + \mathcal{O}(e^{-t}), \\
		\phi_D^{\prime \prime}   &= \phi_D^\prime +\phi_D^\prime  \frac{d}{d \phi_D}\frac{\frac{\mass^2}{4} + D}{\phi_D} = \phi_D^\prime \biggl(1 - \frac{\frac{\mass^2}{4} + D}{\phi_D^2} \biggr) \\
		&= \biggl( \frac{1}{2} e^t + \mathcal{O}(e^{-t})\biggr) \bigl(1 + \mathcal{O}(e^{-2t})\bigr) 
		=  \frac{1}{2} e^t + \mathcal{O}(e^{-t}), \\
		\phi_D^{\prime \prime \prime}   &= \phi_D^{\prime \prime} \biggl(1 - \frac{\frac{\mass^2}{4} + D}{\phi_D^2} \biggr) + (\phi_D^\prime)^2 \mathcal{O}(e^{-3t})
		=  \frac{1}{2} e^t + \mathcal{O}(e^{-t}).
	\end{align*}
	Plugging this into the formula for $(g_D)_{\alpha \bar{\beta}}$ \eqref{eq:coeffgk} gives 
	\begin{align*}
		(g_D)_{\alpha \bar{\beta}} = e^{-t}\phi_D \delta_{\alpha \beta}  + e^{-2t} (\phi_D^\prime - \phi_D) \bar{z}_\alpha z_\beta 
=\frac{1}{2} \delta_{\alpha \beta} + \mathcal{O}(e^{-t}),
	\end{align*}
	and, consequently, $(g_D)_{\alpha \bar{\beta}} - \frac{1}{2} \delta_{\alpha \beta} =  \mathcal{O}(e^{-t(1 + \delta)/2})$ for $\delta \leq 1$.
	As for the cases $\lvert I \rvert + \lvert J \rvert \in \{1,2\}$, $\partial_z^I \partial_{\bar{z}}^J t = \mathcal{O}(e^{-t(\lvert I \rvert + \lvert J \rvert)/2})$ and $\partial_z^I \partial_{\bar{z}}^J \bar{z}_\alpha z_\beta = \mathcal{O}(e^{t(2 - \lvert I \rvert - \lvert J \rvert)/2})$ yield $\partial_z^I \partial_{\bar{z}}^J (g_D)_{\alpha \bar{\beta}} = \mathcal{O}(e^{-t(2 +\lvert I \rvert + \lvert J \rvert)/2})$. This lies in $\mathcal{O}(e^{-t(1 + \lvert I \rvert + \lvert J \rvert  + \delta)/2})$ for $\delta \leq 1$, which shows that the AE condition is fulfilled for any $\delta \in (0, 1]$. 
\end{proof}

	For $D >0$, the Kähler condition on $[t_0, \infty)$ is equivalent to $\phi_D(t_0) > 0$. Therefore, we want to gather more information about the behavior of $\phi_D$ at $t_0$. 
	
	\begin{lemma}\label{lemma:c2Kähler}
		For all $\vert \sigma \rvert < \frac{\mass}{4}$, there exists $D > 0$ such that $\phi_D(t_0) = \frac{D}{\mass} + \frac{\mass}{4}+ \sigma > 0$ and $\mathrm{sgn}(\phi_D(t_0) -  \phi_D^\prime(t_0) ) = \mathrm{sgn}(\sigma)$.
		It satisfies $D \sim\frac{\mass^2 \pi^2 }{4t_0^2}$ as $t_0 \rightarrow - \infty$.
	\end{lemma}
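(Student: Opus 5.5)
The plan is to reduce the existence claim to a one-variable intermediate value argument in $D$. First I will handle the sign condition, which is purely algebraic. Then I will get the asymptotics by expanding the resulting equation as $D \to 0^+$.

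Fix $|\sigma| < \frac{\mass}{4}$ and set $\phi^*(D) \coloneqq \frac{D}{\mass} + \frac{\mass}{4} + \sigma$. This is positive for every $D > 0$, since $\sigma > -\frac{\mass}{4}$. In the proof of \autoref{lemma:phirproperties} we saw that $\frac{\partial G}{\partial \phi} = \phi/F_D(\phi) > 0$, so $G(\cdot, D, t_0)$ is strictly increasing in $\phi$ on $(0,\infty)$. Hence $\phi_D(t_0) = \phi^*(D)$ holds if and only if $\phi^*(D)$ satisfies \eqref{eq:implicitphir} at $t = t_0$, that is, if and only if
\begin{align*}
	t_0 + \log \frac{1}{2} = H(D) \coloneqq \frac{1}{2} \log F_D(\phi^*(D)) + \frac{\mass}{2\sqrt{D}} \biggl(\arctan\biggl(\frac{\phi^*(D) - \frac{\mass}{2}}{\sqrt{D}}\biggr) - \frac{\pi}{2}\biggr).
\end{align*}
The function $H$ is continuous on $(0,\infty)$, so by the intermediate value theorem it suffices to show that $H(D) \to -\infty$ as $D \to 0^+$ and $H(D) \to +\infty$ as $D \to \infty$.

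As $D \to 0^+$, we have $\phi^*(D) - \frac{\mass}{2} \to \sigma - \frac{\mass}{4} < 0$. So the argument of $\arctan$ tends to $-\infty$, and the second term behaves like $-\frac{\mass \pi}{2\sqrt{D}}$. Meanwhile $\frac12 \log F_D(\phi^*)$ tends to the finite value $\log\lvert\sigma - \frac{\mass}{4}\rvert$. As $D \to \infty$, we have $\phi^* \sim D/\mass$ and $(\phi^* - \frac{\mass}{2})/\sqrt{D} \to \infty$. Using $\arctan x - \frac{\pi}{2} = -\frac1x + \mathcal{O}(x^{-3})$, the second term is $\mathcal{O}(1/D) \to 0$, while the logarithmic term tends to $+\infty$. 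This yields some $D > 0$ with $\phi_D(t_0) = \phi^*(D)$.

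For the sign, I will use $\phi_D' = F_D(\phi_D)/\phi_D$. This gives
\begin{align*}
	\phi_D - \phi_D' = \frac{\phi_D^2 - F_D(\phi_D)}{\phi_D} = \frac{\mass \phi_D - \frac{\mass^2}{4} - D}{\phi_D},
\end{align*}
and at $t_0$ the numerator equals $\mass \sigma$. Since $\phi_D(t_0) > 0$, this gives $\mathrm{sgn}(\phi_D(t_0) - \phi_D'(t_0)) = \mathrm{sgn}(\sigma)$.

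For the asymptotics, let $t_0 \to -\infty$ and let $D = D(t_0)$ be any solution. First I will show $D \to 0$. On each $[\delta, \infty)$, $H$ is continuous and tends to $+\infty$ at infinity, so it is bounded below there. Therefore $H(D) = t_0 + \log\frac12 \to -\infty$ forces $D \to 0$. Next I will refine the expansion at $D \to 0^+$, and this step needs the most care. Writing $x = (\phi^* - \frac{\mass}{2})/\sqrt{D} \to -\infty$, I will use $\arctan x = -\frac{\pi}{2} + \mathcal{O}(|x|^{-1}) = -\frac{\pi}{2} + \mathcal{O}(\sqrt{D})$, uniformly for fixed $\sigma$. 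This gives
\begin{align*}
	H(D) = -\frac{\mass \pi}{2\sqrt{D}} + \mathcal{O}(1).
\end{align*}
Hence $t_0 = -\frac{\mass\pi}{2\sqrt{D}} + \mathcal{O}(1)$, so $\sqrt{D}\,\lvert t_0\rvert \to \frac{\mass\pi}{2}$, which is $D \sim \frac{\mass^2\pi^2}{4t_0^2}$. The main obstacle is to carry out this expansion cleanly and to make sure the $\mathcal{O}(1)$ error does not depend on which solution $D$ was chosen.
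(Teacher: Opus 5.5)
Your proposal is correct and follows the paper's proof essentially step for step. It uses the same intermediate value argument for the implicit equation at $t=t_0$ viewed as a function of $D$, the same identity for the sign (the paper writes it as $\phi_D'(t_0)/\phi_D(t_0) = 1-\sigma\mass/\phi_D(t_0)^2$), and the same expansion $t_0 = -\frac{\mass\pi}{2\sqrt{D}}+\mathcal{O}(1)$ once $D\to 0$ is established. The obstacle you flag at the end is not a real one, since the $\mathcal{O}(1)$ bound is a property of the fixed function $H$ on an interval $(0,\delta_0)$ and therefore applies to every solution $D(t_0)$ as soon as $D(t_0)\to 0$.
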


	\begin{proof}
		Plugging $\phi_D(t_0) = \frac{D}{\mass} + \frac{\mass}{4} + \sigma$ into the implicit formula \eqref{eq:implicitphir} for $\phi_D$ gives 
		\begin{align}\label{eq:implicitphit0}
			t_0 + \log \frac{1}{2} = \frac{1}{2} \log \biggl( \biggl( \frac{D}{\mass} - \frac{\mass}{4} + \sigma \biggr)^2  + D \biggr) 
			+ \frac{\mass}{2 \sqrt{D}} \biggl(\arctan\biggl(\frac{\frac{D}{\mass} - \frac{\mass}{4} + \sigma  }{\sqrt{D}}\biggr) - \frac{\pi}{2}\biggr).
		\end{align}
		The right-hand side can be viewed as a smooth function of $D \in (0, \infty)$. We will show that this function goes to $- \infty$ and $+ \infty$ as $D$ approaches the left and right boundary of the interval, respectively, so that the intermediate value theorem implies that it must be surjective onto $\R$ for all $\vert \sigma \rvert < \frac{\mass}{4}$. Then, given any such $\sigma$, there exists $D >0$ such that $\phi_D(t_0) =\frac{D}{\mass} + \frac{\mass}{4} + \sigma > 0$. 
		Furthermore,
		\begin{align*}
			\frac{\phi_D^\prime(t_0) }{\phi_D(t_0)}
			= \frac{F\bigl(\phi_D(t_0)\bigr)}{\phi_D(t_0)^2}
			= 1 +\frac{- \mass \phi_D(t_0) + \frac{\mass^2}{4} + D}{\phi_D(t_0)^2} 
			= 1- \sigma \frac{\mass}{\phi_D(t_0)^2}
			\lesseqgtr 1 
			\iff \sigma \gtreqless 0
		\end{align*}
		proves the second claim.
		
		Therefore, it only remains to show that
		\begin{align*}
			t_0 \rightarrow \begin{cases}
				+ \infty & \text{as } D \rightarrow  \infty, \\
				- \infty & \text{as } D \rightarrow 0.
			\end{cases}
		\end{align*}
		As $D \rightarrow \infty$, the first term in \eqref{eq:implicitphit0} goes to $ +\infty$, while the second one tends to zero, thus $t_0 \rightarrow + \infty$. 
		As $D$ approaches the left boundary, 
		\begin{align*}
			\biggl( \frac{D}{\mass} - \frac{\mass}{4} + \sigma \biggr)^2  + D \rightarrow \biggl( - \frac{\mass}{4} + \sigma \biggr)^2 \in \biggl(0, \frac{\mass^2}{4}\biggr)
		\end{align*}
		and 
		\begin{align*}
			\frac{D}{\mass} - \frac{\mass}{4} + \sigma \rightarrow - \frac{\mass}{4} + \sigma \in \biggl(- \frac{\mass}{2}, 0 \biggr).
		\end{align*}
		Consequently, the first term remains bounded while the second term tends to $- \infty$, so $t_0 \rightarrow - \infty$.
		
		In fact, note that $t_0 \rightarrow - \infty $ even requires $D \rightarrow 0$ because the first term is bounded from below as
		\begin{align*}
			\biggl( \frac{D}{\mass} - \frac{\mass}{4} + \sigma \biggr)^2 + D 
			&=  \frac{D^2}{\mass^2} + \frac{2 D}{\mass}\biggl( - \frac{\mass}{4} + \sigma\biggr) + \biggl( - \frac{\mass}{4} + \sigma \biggr)^2 + D \\
			&=  \frac{D^2}{\mass^2} + \frac{2 D}{\mass}\biggl(\frac{\mass}{4} + \sigma\biggr) + \biggl( - \frac{\mass}{4} + \sigma \biggr)^2 \\
			&> \biggl( - \frac{\mass}{4} + \sigma \biggr)^2 > 0,
		\end{align*}
		and so $t_0 \rightarrow - \infty$ is only possible if 
		$D \rightarrow 0$. 
		If that is the case, \eqref{eq:implicitphit0} gives
		\begin{align*}
			t_0 + \log \frac{1}{2} 
			&= \mathcal{O}(1) 	+ \frac{\mass}{2 \sqrt{D}} \biggl(- \pi + \mathcal{O}\biggl(\biggl(\frac{ \frac{D}{\mass} - \frac{\mass}{4} + \sigma }{\sqrt{D}}\biggr)^{-1}\biggr)  \biggr) \\
			&=\mathcal{O}(1) + \frac{\mass}{2\sqrt{D}} \bigl(- \pi + \mathcal{O}(\sqrt{D})\bigr) \\*
			&= \mathcal{O}(1) - \frac{\mass \pi }{2\sqrt{D}},
		\end{align*}
			which yields the asymptotic behavior
		\begin{align*}
			D = \frac{\mass^2 \pi^2}{4(t_0 + \mathcal{O}(1))^2} \sim\frac{\mass^2 \pi^2 }{4t_0^2}
		\end{align*}
		as $t_0 \rightarrow - \infty$.
	\end{proof}

\section{The transition interval}

So far, we have constructed functions
$\phi_D : [t_0, \infty) \rightarrow \R$ which satisfy $\phi_D > 0, \phi_D^\prime > 0$ and 
\begin{align*}
	2 \phi_D - \phi_D^\prime - \frac{\phi_D \phi_D^{\prime \prime}}{\phi_D^\prime} 
	=\mass,
\end{align*}
and are uniquely determined by a suitable choice of $D >0$. Since the Kähler conditions are open, each of these solutions extends at least slightly beyond $t_0$.

Given such a $D$, Picard--Lindelöf yields a local solution $\phi_{D, \varepsilon}$ to 
\begin{align*}
	2 \phi_{D, \varepsilon} - \phi_{D, \varepsilon}^\prime - \frac{\phi_{D, \varepsilon} \phi_{D, \varepsilon}^{\prime \prime}}{\phi_{D, \varepsilon}^\prime} 
	=A_\varepsilon
	\iff 
	(\log \phi_{D, \varepsilon}^\prime)^\prime = \frac{2 \phi_{D, \varepsilon} - \phi_{D, \varepsilon}^\prime - A_\varepsilon}{\phi_{D, \varepsilon}}
\end{align*}
satisfying $\phi_{D, \varepsilon}(t_0 + \varepsilon) = \phi_D(t_0 + \varepsilon)$ and $\phi_{D, \varepsilon}^\prime(t_0 + \varepsilon) = \phi_D^\prime(t_0 + \varepsilon)$ for every $\varepsilon > 0$. 
By uniqueness, $\phi_{D, \varepsilon} = \phi_D$ on $[t_0+ \varepsilon , \infty)$. 
In particular, $\phi_{D, \varepsilon}$ extends to $(T_{D, \varepsilon}, \infty) \rightarrow \R$ for a $T_{D, \varepsilon} \in [- \infty, t_0 + \varepsilon)$. 
For convenience, we set
\begin{align*}
	p_D &\coloneqq \phi_D(t_0) > 0, 
	\quad &q_D &\coloneqq \phi_D^\prime(t_0) > 0, \\*
	P_{D, \varepsilon} &\coloneqq \phi_D(t_0 + \varepsilon) = \phi_{D, \varepsilon}(t_0 + \varepsilon), 
	\quad &Q_{D, \varepsilon} &\coloneqq \phi_D^\prime(t_0 + \varepsilon) = \phi_{D, \varepsilon}^\prime(t_0 + \varepsilon).
\end{align*}

We will show that $\phi_{D, \varepsilon}$ survives the transition interval if it is short enough and that that upper bound on $\varepsilon$ can be chosen to be lower semi-continuous in $D$. To do so, we use a bootstrapping argument based on the fact that the fundamental theorem of calculus turns bounds on $\phi_{D, \varepsilon}^\prime$ into bounds on $\phi_{D, \varepsilon}$, while the ODE yields the reverse implication.

\begin{lemma}\label{lemma:bootstrapping}
	Fix $0 < l_1 < 1 < l_2$ and let $\widetilde{D} > 0$ be such that $\phi_{\tilde{D}}, \phi_{\tilde{D}}^\prime \in (0, \infty)$ on $(t_0 - \tilde{\varepsilon}, \infty)$ for $\tilde{\varepsilon} > 0$. 
	Then there exists a lower semi-continuous function $\varepsilon_M: (0, \tilde{D}] \rightarrow (0, \tilde{\varepsilon}]$ such that 
	for all $D \in (0, \tilde{D}]$
	 and $0 < \varepsilon < \varepsilon_M(D)$,
	 	\begin{align}\label{eq:boundstransition}
	 	l_1 p_D  < \phi_{D,  \varepsilon} < l_2 p_D  \qquad \text{ and } \qquad l_1 q_D  < \phi_{D,  \varepsilon}^\prime   < l_2 q_D
	 \end{align}
	on 
	$[t_0- \varepsilon, t_0 + \varepsilon]$. 
\end{lemma}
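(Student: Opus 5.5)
We argue by a continuity (bootstrapping) argument on the transition interval, run backwards in time from $t_0+\varepsilon$, with constants that depend continuously on $D$; the function $\varepsilon_M$ is then built from these constants.

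\emph{Step 1: uniform bounds on the data.} Since $\phi_{\tilde D},\phi'_{\tilde D}>0$ on $(t_0-\tilde\varepsilon,\infty)$ and $\phi_D$ is decreasing in $D$ by Lemma~\ref{lemma:phirproperties}, we get $\phi_D\ge\phi_{\tilde D}>0$ for all $D\le\tilde D$. Because $\phi_D'=F_D(\phi_D)/\phi_D$ with $D>0$, each $\phi_D$ for $D\in(0,\tilde D]$ is Kähler on $(t_0-\tilde\varepsilon,\infty)$. The maps $D\mapsto p_D,q_D$ are continuous and positive, and so are $(D,\varepsilon)\mapsto P_{D,\varepsilon},Q_{D,\varepsilon}$. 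Choose intermediate constants $l_1<l_1'<1<l_2'<l_2$. By continuity there is a lower semi-continuous (in fact continuous) $\varepsilon_1(D)\in(0,\tilde\varepsilon]$ such that $P_{D,\varepsilon}\in(l_1'p_D,l_2'p_D)$ and $Q_{D,\varepsilon}\in(l_1'q_D,l_2'q_D)$ for all $\varepsilon<\varepsilon_1(D)$. One explicit choice: bound $\lvert\phi_D'\rvert,\lvert\phi_D''\rvert$ on $[t_0,t_0+\tilde\varepsilon/2]$ by a continuous function of $D$, using the explicit ODE $\phi_D'=F_D(\phi_D)/\phi_D$.

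\emph{Step 2: bootstrap.} Fix $\varepsilon<\varepsilon_1(D)$ and let $\tau\ge t_0-\varepsilon$ be the infimum of those $s$ such that \eqref{eq:boundstransition} holds on $[s,t_0+\varepsilon]$. By Lemma~\ref{lemma:globalobstruction}, the solution $\phi_{D,\varepsilon}$ exists and stays Kähler as long as $\phi_{D,\varepsilon}$ stays bounded below by $l_1p_D>0$, so $\phi_{D,\varepsilon}$ is defined on $[\tau,t_0+\varepsilon]$. On this interval the bounds \eqref{eq:boundstransition} and $0\le A_\varepsilon\le\mass$ give
\begin{align*}
\bigl\lvert(\log\phi_{D,\varepsilon}')'\bigr\rvert\le\frac{2l_2p_D+l_2q_D+\mass}{l_1p_D}=:K(D),
\end{align*}
which is continuous in $D$. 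Integrating over an interval of length at most $2\varepsilon$ yields $Q_{D,\varepsilon}e^{-2\varepsilon K}\le\phi_{D,\varepsilon}'\le Q_{D,\varepsilon}e^{2\varepsilon K}$, and hence
\begin{align*}
\lvert\phi_{D,\varepsilon}-P_{D,\varepsilon}\rvert\le 2\varepsilon\, Q_{D,\varepsilon}e^{2\varepsilon K}.
\end{align*}
Choose $\varepsilon_2(D)>0$, continuous in $D$, so small that $e^{\pm 2\varepsilon K}$ keeps $l_1'q_D<Q<l_2'q_D$ strictly inside $(l_1q_D,l_2q_D)$, and that $2\varepsilon\, l_2'q_De^{2\varepsilon K}$ is smaller than $\min\{(l_1'-l_1)p_D,(l_2-l_2')p_D\}$. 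Then the bounds \eqref{eq:boundstransition} hold in strict, improved form at $\tau$. By openness they extend slightly past $\tau$, which forces $\tau=t_0-\varepsilon$; the endpoint itself is handled by closedness with the strict improved bounds.

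\emph{Step 3: definition of $\varepsilon_M$.} Set $\varepsilon_M(D):=\min\{\varepsilon_1(D),\varepsilon_2(D),\tilde\varepsilon\}$. Each ingredient is built from continuous positive functions of $D$, namely $p_D$, $q_D$, $K(D)$, and the moduli controlling $P_{D,\varepsilon}-p_D$ and $Q_{D,\varepsilon}-q_D$, so $\varepsilon_M$ is continuous and in particular lower semi-continuous.

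The main obstacle is Step 1: making the choice of $\varepsilon_1$ lower semi-continuous rather than merely pointwise positive. To handle it, I would avoid abstract continuity moduli and instead use the mean value theorem with an explicit bound on $\phi_D''=\phi_D'\bigl(1-(\mass^2/4+D)/\phi_D^2\bigr)$ on $[t_0,t_0+\tilde\varepsilon/2]$. This bound is continuous in $D$ because $\phi_D$ depends smoothly on $(D,t)$.
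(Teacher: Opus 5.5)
Your proof is correct, and its bootstrap is the paper's. Your $l_1',l_2'$ play the role of $k_1=(l_1+1)/2$ and $k_2=(l_2+1)/2$, your $K(D)$ is exactly the paper's $K_D$, and the open--closed argument on $[t_0-\varepsilon,t_0+\varepsilon]$ is the same.

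The genuine difference is in how you obtain the regularity of $\varepsilon_M$. The paper takes $\varepsilon_1(D)$ to be the largest radius on which $\phi_D,\phi_D'$ stay in the $k$-bands around $p_D,q_D$. This is in general only lower semi-continuous, and the paper proves that via a tube-lemma argument for the continuous function $H(D,t)$. You instead control $\lvert P_{D,\varepsilon}-p_D\rvert$ and $\lvert Q_{D,\varepsilon}-q_D\rvert$ by $\varepsilon\max_{[t_0,t_0+\tilde\varepsilon/2]}\lvert\phi_D'\rvert$ and $\varepsilon\max_{[t_0,t_0+\tilde\varepsilon/2]}\lvert\phi_D''\rvert$. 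These are continuous in $D$ by the joint smoothness of $\phi_D(t)$ from Lemma~\ref{lemma:phirproperties}. This yields a smaller but explicit and continuous $\varepsilon_1$, hence a continuous $\varepsilon_M$, with no tube lemma needed. That is more than is used later, where $\varepsilon_M$ only has to attain a positive minimum on a compact interval of $D$'s.

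Two smaller deviations are also fine. First, you estimate $\lvert\phi_{D,\varepsilon}-P_{D,\varepsilon}\rvert$ with the improved derivative bound $Q_{D,\varepsilon}e^{2\varepsilon K}$, rather than with the a priori bound $l_2q_D$ plus monotonicity. Second, you invoke Lemma~\ref{lemma:globalobstruction} to guarantee that the solution exists up to and slightly beyond $\tau$. This makes explicit what the paper leaves implicit when it asserts that $S_{D,\varepsilon}$ is closed.
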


\begin{proof} 
	To begin with, recall that $\phi_{D}$ is decreasing in $D$ by \autoref{lemma:phirproperties}. Therefore, $\phi_D \geq \phi_{\tilde{D}} > 0$ for $0 < D \leq \tilde{D}$, hence \autoref{lemma:globalobstruction} ensures that $\phi_{D}, \phi_{D}^\prime \in (0, \infty)$ on $(t_0 - \tilde{\varepsilon}, \infty)$. 
	
	Fix $D \in (0, \tilde{D}]$ and $\varepsilon  > 0$. 
	We define
	\begin{align*}
		S_{D, \varepsilon} \coloneqq \bigl\{t \in [t_0 - \varepsilon, t_0 + \varepsilon] \mid &\, \phi_{D, \varepsilon} \text{ exists on } [t, t_0 + \varepsilon] \text{ with } l_1 p_D \leq \phi_{D, \varepsilon} \leq l_2 p_D\\
		 &\, \text{and } l_1 q_D \leq \phi_{D, \varepsilon}^\prime \leq l_2 q_D\bigr\}.
	\end{align*}
	Since $\phi_{D, \varepsilon}$ is smooth and the conditions are closed, $S_{D, \varepsilon}$ is a closed subinterval of $[t_0 - \varepsilon, t_0 + \varepsilon]$.
	We need to find conditions on $\varepsilon$ under which bootstrapping gives $\inf S_{D, \varepsilon} = t_0 - \varepsilon$. 
	
	To do so, we must first ensure that $S_{D, \varepsilon} \neq \emptyset$.
	Let $k_1 \coloneqq (l_1 + 1)/2$ and $k_2 \coloneqq (l_2 + 1)/2$. 
	As $\phi_D$ is smooth,
	\begin{align*}
		\varepsilon_1(D) \coloneqq \sup \bigl\{\rho \in (0, \tilde{\varepsilon})\mid &\, k_1 p_D < \phi_D(t) < k_2 p_D  \text{ and }  k_1 q_D < \phi_D^\prime(t) < k_2 q_D \\
		&\, \text{for } \lvert t - t_0 \rvert < \rho \bigr\}
	\end{align*}
	is positive.
	By imposing $\varepsilon < \varepsilon_1(D)$, we achieve that 
	\begin{align*}
		l_1 p_D< k_1 p_D< P_{D, \varepsilon} < k_2p _D< l_2p_D \quad \text{ and } \quad 
		l_1 q_D< k_1 q_D< Q_{D, \varepsilon}  < k_2 q_D < l_2 q_D,
	\end{align*}
	hence $S_{D, \varepsilon} \neq \emptyset$.

	Next, we apply the fundamental theorem of calculus and the ODE to improve the given estimates on $S_{D, \varepsilon}$. For $t \in S_{D, \varepsilon}$, we have 
	\begin{align*}
		\phi_{D, \varepsilon}(t) &= \phi_D(t_0 + \varepsilon) - \int^{t_0 + \varepsilon}_t \phi_{D, \varepsilon}^\prime (s) \D s \geq P_{D, \varepsilon} - \int^{t_0 + \varepsilon}_t l_2 q_D \D s 
		> k_1 p_D - 2 \varepsilon l_2 q_D, \\*
		\phi_{D, \varepsilon}(t) &= \phi_D(t_0 + \varepsilon) - \int^{t_0 + \varepsilon}_t \phi_{D, \varepsilon}^\prime (s) \D s \leq P_{D, \varepsilon} < k_2 p_D < l_2 p_D.
	\end{align*} 
	These are strictly better than the given bounds on $\phi_{D, \varepsilon}$ if and only if
	\begin{align*}
		k_1 p_D - 2 \varepsilon l_2 q_D \geq l_1 p_D \iff \varepsilon \leq \varepsilon_2(D) \coloneqq \frac{(k_1 - l_1) p_D}{2 l_2 q_D}.
	\end{align*}
	
	On the other hand,
	we can bound 
	\begin{align*}
		\lvert (\log \phi_{D, \varepsilon}^\prime)^\prime \rvert
		= \frac{\lvert 2 \phi_{D, \varepsilon} - \phi_{D, \varepsilon}^\prime - A_\varepsilon \rvert}{\lvert \phi_{D, \varepsilon} \rvert}
		\leq \frac{2 l_2 p_D + l_2 q_D + \mass}{l_1 p_D} \eqqcolon K_D
	\end{align*}
	by a constant $K_D$ which is independent of $\varepsilon$. 
	This leads to bounds on $\log \phi_{D, \varepsilon}^\prime$,
	\begin{align*}
		\log \phi_{D, \varepsilon}^\prime(t)
		&= \log \phi_{D, \varepsilon}^\prime(t_0 + \varepsilon) 
		- \int^{t_0 + \varepsilon}_t(	\log \phi_{D, \varepsilon}^\prime)^\prime(s) \D s \\*
		&\geq \log Q_{D, \varepsilon}
		- \int^{t_0 + \varepsilon}_{t_0-\varepsilon}\lvert(	\log \phi_{D, \varepsilon}^\prime)^\prime(s)\rvert \D s \\*
		&\geq \log Q_{D, \varepsilon} - 2 \varepsilon K_D , \\
		\log \phi_{D, \varepsilon}^\prime(t)
		&= \log \phi_{D, \varepsilon}^\prime(t_0 + \varepsilon) 
		- \int^{t_0 + \varepsilon}_t(	\log \phi_{D, \varepsilon}^\prime)^\prime(s) \D s \\*
		&\leq \log Q_{D, \varepsilon}
		+ \int^{t_0 + \varepsilon}_{t_0-\varepsilon}\lvert(	\log \phi_{D, \varepsilon}^\prime)^\prime(s)\rvert \D s \\*
		&\leq \log Q_{D, \varepsilon} + 2 \varepsilon K_D,
	\end{align*}
	hence
	\begin{align*}
		k_1 q_D e^{- 2 \varepsilon K_D } < Q_{D, \varepsilon} e^{- 2 \varepsilon K_D } \leq \phi_{D, \varepsilon}^\prime \leq Q_{D, \varepsilon} e^{2 \varepsilon K_D } < k_2 q_D e^{2 \varepsilon K_D }.
	\end{align*}
	These are strictly better than the given estimates if and only if 
	\begin{align*}
		k_1 q_D e^{- 2 \varepsilon K_D } \geq l_1 q_D \iff - 2 \varepsilon K_D \geq \log \frac{l_1}{k_1} \iff \varepsilon  \leq \varepsilon_3(D) \coloneqq \frac{1}{2K_D} \log \frac{k_1}{l_1}
	\end{align*}
	and 
	\begin{align*}
		k_2 q_D e^{2 \varepsilon K_D } \leq l_2 q_D \iff 2 \varepsilon K_D \leq \log \frac{l_2}{k_2} \iff \varepsilon  \leq \varepsilon_4(D) \coloneqq \frac{1}{2K_D} \log \frac{l_2}{k_2}. 
	\end{align*}

	Consequently, if $\varepsilon < \varepsilon_M(D) \coloneqq \min \{\varepsilon_1, \varepsilon_2, \varepsilon_3, \varepsilon_4\}(D)$,
	then the strict inequalities
	\begin{align*}
		l_1 p_D < \phi_{D, \varepsilon} < l_2 p_D \qquad \text{ and } \qquad l_1 q_D < \phi_{D, \varepsilon}^\prime < l_2 q_D
	\end{align*}
	hold on $S_{D, \varepsilon}$.
	However, these improved conditions are open and $\phi_{D, \varepsilon}$ is smooth, so $S_{D, \varepsilon}$ is not only closed in $[t_0 - \varepsilon, t_0 + \varepsilon]$ but also open. Since $S_{D, \varepsilon}$ is nonempty, this implies $S_{D, \varepsilon} = [t_0 - \varepsilon, t_0 + \varepsilon]$. As a result, $\phi_{D, \varepsilon}$ satisfies \eqref{eq:boundstransition}
	on the entire transition interval.

	It remains to show that $\varepsilon_M = \min \{\varepsilon_1, \varepsilon_2, \varepsilon_3, \varepsilon_4\} \in (0, \tilde{\varepsilon}]$ is lower semi-continuous on $(0, \tilde{D}]$.
	According to \autoref{lemma:phirproperties}, $\phi_D$ depends smoothly on $(D, t)$ as long as $\phi_D> 0$. In that case, $p_D$ and $q_D$ are smooth in $D$ too, which implies that so are $K_D>0$ and $\varepsilon_2, \varepsilon_3, \varepsilon_4$. 
	
	Define
	\begin{align*}
		H(D, t) \coloneqq  \min \bigl\{\phi_D(t) - k_1 p_D, k_2 p_D - \phi_D(t), \phi_D^\prime(t) - k_1 q_D, k_2 q_D - \phi_D^\prime(t)\bigr\}
	\end{align*}
	for $(D, t) \in (0, \tilde{D}] \times (t_0 - \tilde{\varepsilon}, \infty)$. Then $\varepsilon_1$ can be equivalently written as 
	\begin{align*}
		\varepsilon_1(D) = \sup \{\rho \in (0, \tilde{\varepsilon}) \mid H(D, t) > 0 \text{ for } \lvert t-t_0\rvert < \rho\}.
	\end{align*}
	As the minimum of smooth functions, $H$ is continuous. 
	
	Fix $D^* \in (0, \tilde{D}]$ and suppose that $\varepsilon_1(D^*) > a > 0$. To prove that $\varepsilon_1$ is lower semi-continuous, we need to show that this inequality remains satisfied close to $D^*$. Let $\varepsilon_1(D^*) > b > a$. Then $H(D^*, \cdot) > 0$ on $[t_0 - b, t_0 + b]$. By continuity, $H(D^*, \cdot)$ attains a minimum $H^{\min}>0$ on that closed interval. The set $S \coloneqq \{D^*\} \times [t_0 - b, t_0 + b] $ is a slice of $(0, \tilde{D}] \times [t_0 - b, t_0 + b]$ and is contained in the set 
	\begin{align*}
		N \coloneqq \bigl\{(D, t) \in (0, \tilde{D}] \times [t_0 - b, t_0 + b]  \ \big\vert \ H(D, t) > \tfrac{H^{\min}}{2}\bigr\},
	\end{align*}
	which is open in $(0, \tilde{D}] \times [t_0 - b, t_0 + b]$ because $H$ is continuous. According to the tube lemma, there exists $\delta > 0$ such that $S \subset (B_\delta(D^*) \cap (0, \tilde{D}])  \times  [t_0 - b, t_0 + b] \subset N$. This implies $H(D, t) > 0$ for $(D, t) \in (B_\delta(D^*) \cap (0, \tilde{D}]) \times [t_0 - b, t_0 + b]$, hence $\varepsilon_1(D) \geq b > a$ for $D \in (0, \tilde{D}]$ with $\lvert D - D^* \rvert < \delta$. 
	
	As the minimum of one lower semi-continuous and three smooth functions, $\varepsilon_M$ is lower semi-continuous in $D$. 
\end{proof}

\section{Solutions for \texorpdfstring{$A = 0$}{}}\label{sec:A0}

Beyond the transition interval, $A = 0$, hence any solution $\phi$ on that ray satisfies 
\begin{align*}
	2 \phi - \phi^\prime - \frac{\phi \phi^{\prime \prime}}{\phi^{\prime}} = 0 
	&\iff (\phi \phi^\prime)^\prime = 2 \phi \phi^\prime 
	\iff 1 = \bigl(\ln (\phi \phi^\prime)^{1/2}\bigr)^\prime \\
	&\iff t + d_1 = \ln (\phi \phi^\prime)^{1/2} 
	\iff e^{2(t+d_1)} = \phi \phi^\prime = \frac{1}{2} (\phi^2)^\prime \\
	&\iff e^{2(t+d_1)} + d_2 = \phi^2
	\iff \phi(t) = \sqrt{e^{2(t+d_1)} + d_2}
\end{align*}
for $d_1, d_2 \in \R$.
The Kähler condition remains fulfilled as $t \rightarrow - \infty$ if and only if $d_2 \geq 0$, but the corresponding metric is only smooth at the origin if $d_2 = 0$. This is because, for $d_2 = 0$, the metric takes the form $g_{\alpha\bar{\beta}} = \delta_{\alpha \beta} e^{d_1} \in C^\infty(\R)$. For $d_2 > 0$, on the other hand, 
$g_{2 \bar{2}} = e^{-t} \phi(t) = \sqrt{e^{2 d_1} + d_2 e^{-2t}}$ on $\{z_2 = 0\}$, which diverges as $t \rightarrow - \infty$.

Suppose $\phi_{D, \varepsilon}$ satisfies the Kähler condition on $[t_0 - \varepsilon, \infty)$. 
By Picard--Lindelöf, $\phi_{D, \varepsilon}(t_0 - \varepsilon) $ and $\phi_{D, \varepsilon}^\prime(t_0 - \varepsilon) $ uniquely determine $d_1, d_2$ and hence how the solution continues beyond $t = t_0 - \varepsilon$. As
\begin{align*}
	\phi^\prime(t) = \frac{e^{2(t+d_1)}}{\sqrt{e^{2(t+d_1)} + d_2}} \gtreqqless \frac{e^{2(t+d_1) } + d_2}{\sqrt{e^{2(t+d_1)} + d_2}} = \phi(t) \iff  d_2 \lesseqqgtr 0
\end{align*}
for any $t$, 
$\phi_{D, \varepsilon}$ gives a globally defined smooth AE Kähler metric if and only if $\phi_{D, \varepsilon}(t_0 - \varepsilon) =\phi_{D, \varepsilon}^\prime(t_0 - \varepsilon)$.

\begin{lemma}\label{lemma:globalKähler}
	There exist $D^* >0$ and $\varepsilon^* > 0$ such that $ \phi_{D^*, \varepsilon^*} , \phi_{D^*, \varepsilon^*}^\prime  \in  (0, \infty)$ on $\R$ and $\phi_{D^*, \varepsilon^*}(t_0 - \varepsilon^*) =\phi_{D^*, \varepsilon^*}^\prime(t_0 - \varepsilon^*)$.
	Moreover, $D^* \sim\frac{\mass^2 \pi^2 }{4t_0^2}$ as $t_0 \rightarrow - \infty$.
\end{lemma}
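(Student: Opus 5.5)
The argument is an intermediate value argument in $D$ for the continuous function
\begin{align*}
	h_\varepsilon(D) \coloneqq \phi_{D,\varepsilon}(t_0-\varepsilon) - \phi_{D,\varepsilon}^\prime(t_0-\varepsilon),
\end{align*}
using \autoref{lemma:c2Kähler} to produce two values $D_- < D_+$ at which $h_\varepsilon$ has opposite signs, and \autoref{lemma:bootstrapping} to keep the solutions Kähler across the transition interval.

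First, fix a small $\sigma_0 \in (0, \frac{\mass}{4})$. By \autoref{lemma:c2Kähler}, choose $D_+$ with $\phi_{D_+}(t_0) = \frac{D_+}{\mass} + \frac{\mass}{4} + \sigma_0$; then $p_{D_+} - q_{D_+} > 0$. Choose $D_-$ corresponding to $-\sigma_0$; then $p_{D_-} - q_{D_-} < 0$.

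I should check the ordering $D_- < D_+$, or simply take $\tilde D \coloneqq \max\{D_-,D_+\}$. Since $\phi_{\tilde D}(t_0) > 0$ and the Kähler conditions are open, there is $\tilde\varepsilon > 0$ with $\phi_{\tilde D}, \phi_{\tilde D}^\prime > 0$ on $(t_0 - \tilde\varepsilon, \infty)$. This is exactly the hypothesis of \autoref{lemma:bootstrapping}.

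Next, I want the strict sign of $p_D - q_D$ to survive the transition. Apply \autoref{lemma:bootstrapping} with $l_1, l_2$ close to $1$. Then for $D \in \{D_-, D_+\}$ and $\varepsilon < \varepsilon_M(D)$, the values $\phi_{D,\varepsilon}(t_0 - \varepsilon)$ and $\phi_{D,\varepsilon}^\prime(t_0 - \varepsilon)$ lie within factors $l_1, l_2$ of $p_D$ and $q_D$. If $l_2 q_{D_-} < l_1 p_{D_-}$ fails, this is not immediately enough, so I would rather use continuity in $\varepsilon$ directly. As $\varepsilon \to 0$, both $\phi_{D,\varepsilon}(t_0-\varepsilon) \to p_D$ and $\phi_{D,\varepsilon}^\prime(t_0-\varepsilon) \to q_D$, because the bootstrapping estimates give $\lvert\phi_{D,\varepsilon}^\prime - Q_{D,\varepsilon}\rvert \lesssim \varepsilon K_D$ on the interval. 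Hence for $\varepsilon$ small, $h_\varepsilon(D_+) > 0$ and $h_\varepsilon(D_-) < 0$.

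Fix such an $\varepsilon^*$, also below $\min \varepsilon_M$ on the closed interval $I$ between $D_-$ and $D_+$. Lower semicontinuity of $\varepsilon_M$ on the compact set $I$ gives a positive minimum, so one $\varepsilon^*$ works uniformly in $D \in I$. Then $h_{\varepsilon^*}$ is defined on $I$. It is continuous in $D$, because it comes from continuous dependence of ODE solutions on initial data $(P_{D,\varepsilon^*}, Q_{D,\varepsilon^*})$, which are smooth in $D$. The intermediate value theorem gives $D^*$ with $h_{\varepsilon^*}(D^*) = 0$.

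For $D^*$, the solution is Kähler on $[t_0 - \varepsilon^*, \infty)$ by the bootstrapping bounds. By \autoref{sec:A0} it continues as $\sqrt{e^{2(t+d_1)}}$ with $d_2 = 0$, so $\phi, \phi^\prime > 0$ on all of $\R$.

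For the asymptotics, $D^*$ lies between $D_-$ and $D_+$, and each of these behaves like $\frac{\mass^2\pi^2}{4t_0^2}$ as $t_0 \to -\infty$ by \autoref{lemma:c2Kähler}. The error there is uniform in $\sigma \in [-\sigma_0, \sigma_0]$. So squeezing gives $D^* \sim \frac{\mass^2\pi^2}{4t_0^2}$.

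The main obstacle is the uniform choice of $\varepsilon^*$ and the justification that the sign of $p_D - q_D$ persists to $t_0 - \varepsilon$. The latter needs $\lvert\phi_{D,\varepsilon}(t_0-\varepsilon) - p_D\rvert$ and $\lvert\phi^\prime_{D,\varepsilon}(t_0-\varepsilon) - q_D\rvert$ to tend to $0$ as $\varepsilon \to 0$. This follows from the integral bounds in the proof of \autoref{lemma:bootstrapping}, namely $\lvert\phi - P\rvert \le 2\varepsilon l_2 q_D$ and the bound on the log-derivative, together with $P_{D,\varepsilon} \to p_D$ and $Q_{D,\varepsilon} \to q_D$.
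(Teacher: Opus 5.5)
Your proposal is correct and follows the paper's proof essentially step for step: two parameters $D^\pm$ from $\sigma = \pm\sigma_0$, a uniform $\varepsilon^*$ from the positive minimum of the lower semi-continuous $\varepsilon_M$ on the compact $D$-interval, the intermediate value theorem in $D$, continuation with $d_2 = 0$ below $t_0 - \varepsilon^*$, and squeezing for the asymptotics. The only difference is how you carry the sign of $p_D - q_D$ to $t_0 - \varepsilon$: you let $\varepsilon \to 0$ using the internal estimates from the proof of \autoref{lemma:bootstrapping}, whereas the paper chooses $l_1 = 1-\delta$ and $l_2 = 1+\delta$ with $\delta$ small relative to the fixed ratios $p_{D^\pm}/q_{D^\pm} = 1 \pm \tau^\pm$. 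That is exactly the first route you abandoned, and it makes the sign follow from the statement of that lemma alone.
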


\begin{proof}
	Set $\sigma^\pm \coloneqq \pm \frac{\mass}{48} $.
	By \autoref{lemma:c2Kähler}, there exist $D^\pm > 0$ such that $p_{D^\pm} = \frac{D^\pm}{\mass} + \frac{\mass}{4}+ \sigma^\pm > 0$.
	In particular, 
	$\phi_{D^\pm}, \phi_{D^\pm}^\prime \in (0, \infty)$ on $(t_0 - \tilde{\varepsilon}, \infty)$ for some $\tilde{\varepsilon} > 0$, 
	and $p_{D^\pm} = (1 \pm \tau^\pm)q_{D^\pm} > 0$ for some $\tau^\pm > 0$ .
	Since 
	\begin{align*}
		\lim_{\delta \rightarrow 0^+}\frac{1-\delta}{1+ \delta} (1+\tau^+) = 1 + \tau^+ > 1
		\quad \text{ and } \quad 
		\lim_{\delta \rightarrow 0^+}\frac{1+\delta}{1- \delta} (1-\tau^-) = 1 - \tau^- < 1 ,
	\end{align*}
	we can pick $0 < \delta < \frac{1}{2}$ small enough such that 
	\begin{align*}
		\frac{1-\delta}{1+ \delta} (1+\tau^+) \geq 1
		\qquad \text{ and } \qquad 
		\frac{1+\delta}{1- \delta} (1-\tau^-) \leq 1.
	\end{align*}
	
	Setting $\frac{1}{2} < l_1 = 1-\delta < 1 < l_2 = 1 + \delta < \frac{3}{2}$ in \autoref{lemma:bootstrapping} implies that 
	$\varepsilon_M$ attains a positive minimum $0 < \varepsilon_M^{\min}\leq \tilde{\varepsilon}$ on the compact interval $C \coloneqq [\min\{D^+, D^-\}, \max\{D^+, D^-\}]$, and that for all $D \in C$ and $0 < \varepsilon < \varepsilon_M^{\min}$,
	\begin{align*}
		(1-\delta) p_D  < \phi_{D, \varepsilon}< (1+\delta) p_D  \qquad \text{ and } \qquad (1-\delta) q_D  < \phi^\prime_{D, \varepsilon}  < (1+\delta) q_D
	\end{align*}
	on $[t_0- \varepsilon, t_0 + \varepsilon]$. 
	Applying this to $D^\pm$ at $t = t_0 - \varepsilon$ gives 
	\begin{align*}
		\phi_{D^+, \varepsilon}(t_0 - \varepsilon) 
		&> (1-\delta) p_{D^+} =   (1-\delta) (1 + \tau^+)q_{D^+} >  \frac{1-\delta}{1+\delta}  (1 + \tau^+) \phi^\prime_{D^+, \varepsilon}(t_0 - \varepsilon) \\*
		&\geq \phi^\prime_{D^+, \varepsilon}(t_0 - \varepsilon)
		\intertext{and}
		\phi_{D^-, \varepsilon}(t_0 - \varepsilon) 
		&< (1+\delta) p_{D^-} =   (1+\delta) (1 - \tau^-)q_{D^-} <  \frac{1+\delta}{1-\delta}  (1 - \tau^-) \phi^\prime_{D^-, \varepsilon}(t_0 - \varepsilon) \\*
		&\leq \phi^\prime_{D^-, \varepsilon}(t_0 - \varepsilon).
	\end{align*}
	Consequently, we have $\phi_{D, \varepsilon} , \phi_{D, \varepsilon}^\prime  \in  (0, \infty)$ on $[t_0 - \varepsilon, \infty)$, as well as 
	\begin{align*}
		\phi_{D^+, \varepsilon}(t_0 - \varepsilon)  > \phi^\prime_{D^+, \varepsilon}(t_0 - \varepsilon)
		\qquad \text{ and } \qquad 
		\phi_{D^-, \varepsilon}(t_0 - \varepsilon) < \phi^\prime_{D^-, \varepsilon}(t_0 - \varepsilon)
	\end{align*}
	for all $D \in C$ and $0 < \varepsilon < \varepsilon_M^{\min}$.
	
	As $D$ affects $\phi_{D, \varepsilon}$ only through the initial conditions,
	and solutions of smooth ODEs depend smoothly on $t$ and the latter, $\phi_{D, \varepsilon}(t)$ is smooth in $(D, t)$. Therefore, $\phi_{D, \varepsilon} - \phi_{D, \varepsilon}^\prime$ depends smoothly on $D$, and the intermediate value theorem implies that there exists $D^* \in C$ such that $\phi_{D^*, \varepsilon^*} , \phi_{D^*, \varepsilon^*}^\prime  \in  (0, \infty)$ on $[t_0 - \varepsilon^*, \infty)$ and $\phi_{D^*, \varepsilon^*}(t_0 - \varepsilon^*) = \phi^\prime_{D^*, \varepsilon^*}(t_0 - \varepsilon^*)$ for $\varepsilon^* \coloneqq \varepsilon_M^{\min}/2$. 
	The preceding discussion further implies that $\phi_{D^*, \varepsilon^*} , \phi_{D^*, \varepsilon^*}^\prime  \in  (0, \infty)$ remains true on all of $\R$.

According to \autoref{lemma:c2Kähler}, 
$D^\pm \sim\frac{\mass^2 \pi^2 }{4t_0^2}$ as $t_0 \rightarrow - \infty$.
As $D^*$ lies between $D^+$ and $D^-$, it behaves in the same way. 
\end{proof}

 \section{Proof of \autoref{thm:tentacleex}}\label{sec:ProofThm2}
 
 For all $k \in \N$, \autoref{lemma:globalKähler} provides us with $D_k \coloneqq D^*(\mass_k, t_{0,k}) > 0$ and $\varepsilon_k \coloneqq  \varepsilon^*(\mass_k, t_{0,k}) > 0$ for $\mass_k \coloneqq \frac{1}{k}$ and $t_{0,k} \coloneqq - e^{1/\mass_k} = - e^{k}$ such that $\phi_k \coloneqq \phi_{D^*, \varepsilon^*}$ defines a smooth AE Kähler metric $g_k$ on $(\C^2, J_0)$ with nonnegative and integrable scalar curvature $$R_k = \frac{2 A_{\varepsilon^*}^\prime}{\phi_k\phi_k^\prime}.$$
We will show that this sequence has all the properties required by \autoref{thm:tentacleex}.
Before we do so, let us derive some estimates that we need for the proof.

\begin{lemma}\label{lemma:diamEuclSphere}
	The $g_k$-diameter of the Euclidean sphere
	\begin{align*}
		S^3_\sigma = \{z \in \C^2 \mid r(z) = \sigma\} = \{(z_1, z_2) \in \C^2 \mid \lvert z_1 \rvert^2 + \lvert z_2 \rvert^2= \sigma^2\}
	\end{align*}
	of fixed radius $\sigma  > 0$ is bounded from above by 
	\begin{align*}
		\diam_{g_k} S^3_\sigma \leq \sqrt{\frac{65}{2} \pi^2 \phi_k (\log \sigma^2)
			+ 32 \pi^2 \phi_k^\prime(\log \sigma^2)}.
	\end{align*}
\end{lemma}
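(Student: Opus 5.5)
We will bound the $g_k$-length of explicit curves on the sphere $S^3_\sigma$ itself, using the Hopf fibration structure. On $S^3_\sigma$ we have $t=\log\sigma^2$ fixed, so by \eqref{eq:coeffgk} the metric restricted to tangent vectors of the sphere splits into two parts. Directions orthogonal (in the Hermitian sense) to the position vector $z$, i.e.\ horizontal for the Hopf map, see only the term $e^{-t}\phi\,\delta_{\alpha\beta}$. The Hopf fibre direction $iz$ sees in addition the term $e^{-2t}(\phi'-\phi)\lvert z\rvert^4$. For a tangent vector $v$ of the sphere, write $v = v_H + \lambda\, i z$ with $v_H\perp z$ and $\lambda\in\R$. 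Then, up to the normalization $\lvert v\rvert_g^2 = 2\,g_{\alpha\bar\beta}v^\alpha\bar v^\beta$ (or without the factor $2$, depending on convention; this only affects constants),
\begin{align*}
g(v,v)\;\propto\; e^{-t}\phi\,\lvert v_H\rvert^2_{\eucl} + e^{-t}\phi'\,\lambda^2\sigma^2 ,
\end{align*}
because $e^{-t}\phi\sigma^2\lambda^2 + e^{-2t}(\phi'-\phi)\lambda^2\sigma^4 = e^{-t}\phi'\lambda^2\sigma^2$. Since $e^{-t}=\sigma^{-2}$, the horizontal part has squared length $\phi\,\lvert v_H\rvert^2/\sigma^2$ and the fibre part $\phi'\lambda^2$. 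Thus the sphere is a Berger-type sphere: the base $\mathbb{CP}^1$ is scaled by $\phi$ and the fibres by $\phi'$.

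To connect two points $x,y\in S^3_\sigma$, we first travel horizontally from $x$ to the Hopf fibre through $y$. A horizontal lift of a Fubini--Study geodesic in the base, or simply a Euclidean great-circle arc orthogonal to the fibres, has Euclidean length at most $\tfrac{\pi}{2}\sigma$, since the base has Fubini--Study diameter $\pi/2$ on the unit sphere. Its $g_k$-length is therefore at most $\tfrac{\pi}{2}\sqrt{c\,\phi}$. We then move along the fibre $e^{i\theta}y$ with $\theta\in[-\pi,\pi]$, where $\lambda=\dot\theta$, which costs at most $\pi\sqrt{c\,\phi'}$. Applying $\sqrt{a}+\sqrt{b}\le\sqrt{2a+2b}$ turns the sum into a single square root of the form $\sqrt{\alpha\pi^2\phi+\beta\pi^2\phi'}$ with explicit $\alpha,\beta$.

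The stated constants $\tfrac{65}{2}$ and $32$ are generous. This suggests the author used a cruder path, for instance moving coordinatewise: first rotate the phases of $z_1$ and $z_2$ separately, then move along a great circle in the real $(\lvert z_1\rvert,\lvert z_2\rvert)$ plane. With such a path one bounds each segment using only $g(v,v)\le c\,e^{-t}(\phi+\phi')\lvert v\rvert^2_{\eucl}$, or similar crude estimates, and then sums with Cauchy--Schwarz. Whichever path is chosen, the final inequality follows as long as the resulting constants do not exceed $\tfrac{65}{2}$ and $32$. If needed, one can loosen constants on purpose, since only an upper bound is required.

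The main obstacle is the bookkeeping of the decomposition and of the normalization conventions: the relation between $g_{\alpha\bar\beta}$ and the Riemannian length, and the orthogonal splitting of $v$, which is exact here because $v_H\perp z$ makes the cross term vanish. These must be handled carefully so the constants land below the stated ones. Note that $\phi'-\phi$ may have either sign, but the formula above only involves $\phi,\phi'>0$, so no case distinction is needed.
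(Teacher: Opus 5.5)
Your argument is correct, but it uses a different path from the paper's proof.

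\textbf{The paper's route.} The paper parametrizes $S^3_\sigma$ by $(\sigma\cos\theta\, e^{i\alpha},\sigma\sin\theta\, e^{i\beta})$ and joins two points by the path that is linear in $(\theta,\alpha,\beta)$. Along it, the paper computes
\begin{align*}
\tfrac12 g_k(\dot\gamma,\dot\gamma)=\phi_k\bigl(\dot\theta^2+(\dot\alpha-\dot\beta)^2\cos^2\theta\sin^2\theta\bigr)+\phi_k^\prime\bigl(\dot\alpha\cos^2\theta+\dot\beta\sin^2\theta\bigr)^2 .
\end{align*}
This is exactly your Berger splitting written in Hopf coordinates: the $\phi_k$-term is the horizontal part and the $\phi_k^\prime$-term is the fibre part. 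That straight-line path is neither horizontal nor vertical. The paper therefore bounds crudely with $\lvert\dot\theta\rvert\le\pi/2$ and $\lvert\dot\alpha\rvert,\lvert\dot\beta\rvert\le 2\pi$, which is where $\frac{65}{2}$ and $32$ come from.

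\textbf{Your route.} You use the invariant splitting $T_zS^3_\sigma=z^{\perp_{\C}}\oplus\R\, iz$ and a two-piece path adapted to it. The first piece is the great-circle arc from $x$ to the point $y'$ of the fibre through $y$ with $\langle x,y'\rangle\ge 0$. That is the precise choice that makes the arc horizontal, with angle $\arccos(\langle x,y'\rangle/\sigma^2)\le\pi/2$. The second piece is a fibre arc of angle at most $\pi$.

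\textbf{Normalization.} The convention you worry about is not actually ambiguous. Since $g_{\alpha\bar\beta}\to\frac12\delta_{\alpha\beta}$ at infinity, one must have $g(v,v)=2g_{\alpha\bar\beta}v^\alpha\bar v^\beta$, as in the paper. Your estimate therefore reads
\begin{align*}
\tfrac{\pi}{2}\sqrt{2\phi_k}+\pi\sqrt{2\phi_k^\prime}\le\sqrt{\pi^2\phi_k+4\pi^2\phi_k^\prime},
\end{align*}
which is comfortably below the stated bound.

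\textbf{Comparison.} Your route is more conceptual and gives much sharper constants, though the constants are irrelevant for the later application, which only needs the diameter to vanish as $\phi_k,\phi_k^\prime\to 0$. The paper's route is entirely elementary, since it needs no facts about the Hopf fibration or horizontal great circles, at the cost of a longer coordinate computation.
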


\begin{proof}
	 	To begin with, note that every $z \in  S^3_\sigma$ can be written as 
	\begin{align*}
		z = (z_1, z_2) 
		= \bigl(\lvert z_1 \rvert e^{i \alpha}, \lvert z_2 \rvert e^{i \beta}\bigr)
		= (\sigma \cos \theta  e^{i \alpha}, \sigma \sin \theta  e^{i \beta})
	\end{align*}
	for some $\alpha, \beta \in [0, 2\pi)$ and $\theta \in [0, \frac{\pi}{2}]$. 
	Using this, we see that the two points 
	\begin{align*}
		z = (\sigma \cos \theta_0  e^{i \alpha_0}, \sigma \sin \theta_0  e^{i \beta_0})
		\qquad \text{ and } \qquad 
		w = (\sigma \cos \theta_1  e^{i \alpha_1}, \sigma \sin \theta_1  e^{i \beta_1})
	\end{align*}
	in $S^3_\sigma$
	are joined by the smooth path 
	\begin{align*}
		\gamma(s) = \bigl(\sigma \cos \theta(s)  e^{i \alpha(s)}, \sigma \sin \theta(s)  e^{i \beta(s)}\bigr) \in S^3_\sigma,  \qquad  s \in [0,1],
	\end{align*}
	where
	\begin{align*}
		\theta(s) = \theta_0 (1-s) + \theta_1 s, \qquad
		\alpha (s) = \alpha_0 (1-s) + \alpha_1 s, \qquad 
		\beta(s) = \beta_0 (1-s) + \beta_1 s.
	\end{align*}
	By \eqref{eq:coeffgk}, the metric coefficients of $g_k$ along $\gamma$ are given by 
	\begin{align*}
		(g_k)_{1 \bar{1}} &= \sigma^{-2}\bigl(\phi_k + (\phi_k^\prime - \phi_k) \cos^2 \theta \bigr), 
		&(g_k)_{2 \bar{2}} &= \sigma^{-2}\bigl(\phi_k + (\phi_k^\prime - \phi_k) \sin^2 \theta \bigr), \\*
		(g_k)_{1 \bar{2}} &= \sigma^{-2} e^{i(- \alpha + \beta)} (\phi_k^\prime - \phi_k) \cos \theta \sin \theta,&&
	\end{align*}
	where $\phi_k$ and $\phi_k^\prime$ are evaluated at $t = \log \sigma^2$.
	On the other hand,
	\begin{align*}
		\dot{\gamma} = \bigl(\sigma e^{i \alpha} (- \dot{\theta} \sin \theta + i \dot{\alpha} \cos \theta), \sigma e^{i \beta} (\dot{\theta} \cos \theta + i \dot{\beta} \sin \theta)\bigr)
	\end{align*}
	satisfies 
	\begin{align*}
		\lvert \dot{\gamma}_1 \rvert^2 &= \sigma^2 (\dot{\theta}^2 \sin^2 \theta + \dot{\alpha}^2 \cos^2 \theta), 
		\qquad \quad 
		\lvert \dot{\gamma}_2 \rvert^2 = \sigma^2 (\dot{\theta}^2 \cos^2 \theta + \dot{\beta}^2 \sin^2 \theta), \\
		\dot{\gamma}_1 \overline{\dot{\gamma}_2} &= \sigma^{2} e^{i(\alpha -\beta)} \bigl((- \dot{\theta}^2 + \dot{\alpha} \dot{\beta}) \cos \theta\sin \theta + i \dot{\theta}(\dot{\beta} \sin^2 \theta + \dot{\alpha} \cos^2 \theta) \bigr).
	\end{align*}
	Combining this gives 
	\begin{align*}
		\frac{1}{2}g_k(\dot{\gamma}, \dot{\gamma}) 
		&= (g_k)_{1 \bar{1}} \lvert \dot{\gamma}_1 \rvert^2 + (g_k)_{2 \bar{2}}	\lvert \dot{\gamma}_2 \rvert^2
		+ 2 \,\re\bigl((g_k)_{1 \bar{2}} \dot{\gamma}_1 \overline{\dot{\gamma}_2}\bigr)
		\\
		&= \bigl(\phi_k + (\phi_k^\prime - \phi_k) \cos^2 \theta \bigr) (\dot{\theta}^2 \sin^2 \theta + \dot{\alpha}^2 \cos^2 \theta) \\
		&\quad + \bigl(\phi_k + (\phi_k^\prime - \phi_k) \sin^2 \theta \bigr) (\dot{\theta}^2 \cos^2 \theta + \dot{\beta}^2 \sin^2 \theta) \\
		&\quad + 2 (- \dot{\theta}^2 + \dot{\alpha} \dot{\beta}) (\phi_k^\prime - \phi_k) \cos^2 \theta\sin^2 \theta \\
		&= \phi_k (\dot{\theta}^2 + \dot{\alpha}^2 \cos^2 \theta + \dot{\beta}^2 \sin^2 \theta) \\
		&\quad+ (\phi_k^\prime - \phi_k) (\dot{\alpha}^2 \cos^4 \theta + \dot{\beta}^2 \sin^4 \theta + 2 \dot{\alpha}\dot{\beta} \cos^2 \theta \sin^2 \theta) \\
		&= \phi_k \bigl(\dot{\theta}^2 + (\dot{\alpha} - \dot{\beta})^2 \cos^2 \theta \sin^2 \theta\bigr) 
		+ \phi_k^\prime (\dot{\alpha} \cos^2 \theta + \dot{\beta} \sin^2 \theta)^2 \\
		&\leq \phi_k \bigl(\vert\dot{\theta}\vert^2 + (\lvert\dot{\alpha} \rvert + \lvert\dot{\beta}\rvert)^2 \bigr) 
		+ \phi_k^\prime (\lvert\dot{\alpha}\rvert + \lvert\dot{\beta}\rvert)^2 \\
		&\leq \phi_k \bigl(\bigl(\tfrac{\pi}{2}\bigr)^2 + (4 \pi)^2 \bigr) 
		+ \phi_k^\prime (4 \pi )^2\\
		&= \frac{65}{4} \pi^2 \phi_k 
		+ 16 \pi^2 \phi_k^\prime
	\end{align*}
	and, consequently,
	\begin{align*}
		d_{g_k}(z,w) = \int_0^1 \sqrt{g_k(\dot{\gamma}, \dot{\gamma})(s) } \D s \leq \sqrt{\frac{65}{2} \pi^2 \phi_k (\log \sigma^2)
			+ 32 \pi^2 \phi_k^\prime(\log \sigma^2)}.
	\end{align*}
	As the points were arbitrary, this verifies the claimed inequality.
\end{proof}

\begin{lemma}\label{lemma:limitfk}
	Let $k \in \N$ and define $t_k^{(1/2)} \coloneqq \phi_k^{-1}(\frac{\mass_k}{2})$. The
	 function
		\begin{align*}
		f_k: (0, \infty) \rightarrow (0, \infty), \qquad r \mapsto \frac{1}{\sqrt{2}} \int^{\log r^2}_{- \infty} \sqrt{\phi_k^\prime(s)} \D s
	\end{align*}
	satisfies 
	\begin{align*}
		\lim_{k \rightarrow \infty } f_k(e^{(t_{0,k} + \varepsilon_k)/2}) = 0
		\qquad \text{ and } \qquad
		\lim_{k \rightarrow \infty }f_k(e^{t_k^{(1/2)}/2}) = \infty.
	\end{align*}
\end{lemma}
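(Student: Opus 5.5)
The plan is to estimate $f_k$ separately on the three regimes of $\phi_k$: the region $t\le t_{0,k}-\varepsilon_k$ where $A=0$, the transition interval, and the region $t\ge t_{0,k}+\varepsilon_k$ where $A=\mass_k$. In each regime we use the explicit form of the solution or the bounds of \autoref{lemma:bootstrapping} with $l_1=1-\delta>\frac12$ and $l_2=1+\delta<\frac32$. Throughout, write $p_k=p_{D_k}$ and $q_k=q_{D_k}$. By \autoref{lemma:globalKähler} we have $D_k\sim \mass_k^2\pi^2/(4t_{0,k}^2)=\pi^2/(4k^2e^{2k})$, so $D_k/\mass_k\to 0$ extremely fast. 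We may also assume $\tilde\varepsilon\le 1$, so that $\varepsilon_k\le 1$; this is harmless since $\tilde\varepsilon$ may be shrunk.

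\emph{First limit.} By construction, the global smooth solution satisfies $d_2=0$ on $(-\infty,t_{0,k}-\varepsilon_k]$. Hence $\phi_k=\phi_k'=e^{t+d_1}$ there, and
\begin{align*}
\int_{-\infty}^{t_{0,k}-\varepsilon_k}\sqrt{\phi_k'}\D s=2\sqrt{\phi_k(t_{0,k}-\varepsilon_k)}\le 2\sqrt{\tfrac32 p_k}.
\end{align*}
On the transition interval, \autoref{lemma:bootstrapping} gives $\phi_k'<\frac32 q_k$, so that part contributes at most $2\varepsilon_k\sqrt{\tfrac32 q_k}$. The value $p_k$ is $\frac{D}{\mass}+\frac{\mass}{4}+\sigma$ for some $\sigma\in[\sigma^-,\sigma^+]$; more precisely, $D_k$ lies between $D^\pm$ and $\phi_D(t_0)$ is monotone in $D$. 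Thus $p_k\le D_k/\mass_k+\frac{13}{48}\mass_k\to0$. The identity $q/p=1-\sigma\mass/p^2$ from \autoref{lemma:c2Kähler}, together with $p\ge \frac{11}{48}\mass$, shows that $q_k/p_k$ is bounded, so $q_k\to0$ as well. Both contributions therefore vanish, which gives $f_k(e^{(t_{0,k}+\varepsilon_k)/2})\to0$.

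\emph{Second limit.} First we check that $t_k^{(1/2)}>t_{0,k}+\varepsilon_k$. On the transition interval,
\begin{align*}
\phi_k<\tfrac32 p_k\le \tfrac32\bigl(\tfrac{13}{48}\mass_k+o(\mass_k)\bigr)<0.41\,\mass_k
\end{align*}
for large $k$, and $\phi_k$ is increasing. Hence on $[t_{0,k}+\varepsilon_k,t_k^{(1/2)}]$ we have $\phi_k=\phi_{D_k}$ and $\phi_k'=F_{D_k}(\phi_k)/\phi_k$. Substituting $\D s=\phi\D\phi/F_D(\phi)$ converts the integral into
\begin{align*}
\int_{t_{0,k}+\varepsilon_k}^{t_k^{(1/2)}}\sqrt{\phi_k'}\D s=\int_{P}^{\mass_k/2}\sqrt{\frac{\phi}{F_{D_k}(\phi)}}\D\phi
\ge\sqrt{0.45\,\mass_k}\int_0^{0.05\mass_k}\frac{\D x}{\sqrt{x^2+D_k}}.
\end{align*}
Here $P=\phi_k(t_{0,k}+\varepsilon_k)$, we restrict to $\phi\in[0.45\mass_k,\mass_k/2]$, and we set $x=\frac{\mass_k}{2}-\phi$. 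The last integral equals $\operatorname{arsinh}(0.05\,\mass_k/\sqrt{D_k})$. Using the asymptotics of $D_k$, this is
\begin{align*}
\log\bigl(\mass_k/\sqrt{D_k}\bigr)+\mathcal O(1)=\log\bigl(2e^{k}/\pi\bigr)+\mathcal O(1)\sim k.
\end{align*}
So the lower bound is of order $\sqrt{1/k}\cdot k=\sqrt{k}\to\infty$. Since the remaining parts of the integral defining $f_k$ are nonnegative, $f_k(e^{t_k^{(1/2)}/2})\to\infty$.

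The main obstacle is this last estimate. One has to see that the logarithmic divergence of $\int\D x/\sqrt{x^2+D}$ near the almost-degenerate point $\phi=\mass/2$ (where $F_D$ nearly vanishes) beats the prefactor $\sqrt{\mass_k}$. This works only because $t_{0,k}=-e^{k}$ makes $\log(1/\sqrt{D_k})\sim k$, much larger than $1/\sqrt{\mass_k}=\sqrt{k}$. It also requires care that the lower endpoint $P$ stays below $0.45\,\mass_k$, which uses the quantitative choices $\sigma^\pm=\pm\mass/48$ and $\delta<\frac12$.
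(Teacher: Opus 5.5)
Your proof is correct and follows the paper's argument. It uses the same three-regime split, the explicit formula $\phi_k=e^{t+d_1^k}$ below the transition interval, and the substitution $\D s=\phi\D\phi/F_{D_k}(\phi)$, which exposes the logarithmic divergence $\log(\mass_k/\sqrt{D_k})\sim k$ near $\phi=\frac{\mass_k}{2}$ that beats the prefactor $\sqrt{\mass_k}$. The differences are only technical. On the transition interval you use the pointwise bound $\phi_k'<\frac32 q_k$ together with $q_k\to 0$, whereas the paper uses Cauchy--Schwarz to reduce to the increment $\phi_k(t_{0,k}+\varepsilon_k)-\phi_k(t_{0,k}-\varepsilon_k)<\frac{\mass_k}{2}$. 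You also evaluate the $\mathrm{arsinh}$ integral exactly, whereas the paper bounds $F_{D_k}(\phi)\le 2(\phi-\frac{\mass_k}{2})^2$ on $[\frac{7}{16}\mass_k,\frac{\mass_k}{2}-\sqrt{D_k}]$.
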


\begin{proof}
	First, we show that
	$\phi_{k}$ satisfies 
	\begin{align}\label{eq:boundsphik}
		\phi_{k}(t_{0,k} + \varepsilon_k) \leq \frac{7}{16} \mass_k < \frac{\mass_k}{2} - \sqrt{D_k} <  \frac{\mass_k}{2}
	\end{align}
	for $k$ sufficiently large. 
To do so, let $\sigma_k^\pm = \pm \frac{\mass_k}{48}$, $D_k^\pm$ and $\delta_k \in (0, \frac{1}{2})$ be as in the proof of \autoref{lemma:globalKähler} applied to 
$(\mass_k, t_{0,k}) = (\frac{1}{k}, -e^k)$, 
and write $D_k^{\min} \coloneqq \min \{D_k^+, D_k^-\}$ and $\sigma_k^{\min} \in \{\sigma_k^+, \sigma_k^-\}$ for the corresponding parameter.
	Then the left-hand side is bounded from above by 
	\begin{align*}
		\phi_{k}(t_{0,k} + \varepsilon_k) < \phi_{D_k^{\min}}(t_{0,k} + \varepsilon_k)
		< (1 + \delta_k)  \biggl(\frac{D_k^{\min}}{\mass_k} + \frac{\mass_k}{4}+ {\sigma_k^{\min}} \biggr)
		< \frac{3}{2} \mass_k \biggl(\frac{D_k^{\min}}{\mass_k^2} + \frac{13}{48}\biggr).
	\end{align*}
Since $D_k, D_k^{\min} \sim\frac{\mass_k^2 \pi^2 }{4t_{0,k}^2} \rightarrow 0$ as $k \rightarrow \infty$ by \autoref{lemma:c2Kähler} and \autoref{lemma:globalKähler}, we have 
$\frac{D_k^{\min}}{\mass_k^2} < \frac{1}{48}$ and $\frac{1}{2} - \frac{\sqrt{D_k}}{\mass_k} > \frac{7}{16}$ for $k$ sufficiently large.
Combining this gives
	\begin{align*}
		\frac{\phi_{k}(t_{0,k} + \varepsilon_k) }{\mass_k} 
		<\frac{3}{2}  \biggl(\frac{D_k^{\min}}{\mass_k^2} + \frac{13}{48}\biggr)
		<\frac{7}{16}
		< \frac{1}{2} - \frac{\sqrt{D_k}}{\mass_k}
		< \frac{1}{2}
	\end{align*}
	and thereby proves inequality \eqref{eq:boundsphik}. 
	In the following, we assume that $k$ is sufficiently large so that the latter is satisfied.

	According to \autoref{sec:A0}, $\phi_k(t) = e^{t + d_1^k}$ on $(-\infty,  t_{0,k} - \varepsilon_k]$ for some constant $d_1^k \in \R$ depending on $k$.  
	Integrating gives
	 \begin{align*}
		f_k(e^{(t_{0,k} - \varepsilon_k)/2})
		&= \frac{1}{\sqrt{2}}\int_{- \infty}^{t_{0,k} - \varepsilon_k} \sqrt{\phi_{k}^\prime(s)}\D s 
		= \frac{1}{\sqrt{2}}\int_{- \infty}^{t_{0,k} - \varepsilon_k} e^{(s + d_1^k)/2}\D s 
		= \frac{2}{\sqrt{2}} e^{(t_{0,k} - \varepsilon_k + d_1^k)/2} \\
		&= \sqrt{2 \phi_k(t_{0,k} - \varepsilon_k)} 
		< \sqrt{2 \phi_k(t_{0,k} + \varepsilon_k)} 
		< \sqrt{\mass_k} = \frac{1}{\sqrt{k}}
	\end{align*}
	because of the monotonicity of $\phi_k$ and inequality \eqref{eq:boundsphik}.
	On the other hand, Hölder's inequality yields
	\begin{align*}
		f_k(e^{(t_{0,k} + \varepsilon_k)/2}) - f_k(e^{(t_{0,k} - \varepsilon_k)/2}) 
		&= \frac{1}{\sqrt{2}}\int_{t_{0,k} - \varepsilon_k}^{t_{0,k} + \varepsilon_k} \sqrt{\phi_{k}^\prime(s)}\D s \\
		&\leq \frac{1}{\sqrt{2}} \sqrt{2\varepsilon_k}\biggl(\int_{t_{0,k} - \varepsilon_k}^{t_{0,k} + \varepsilon_k} \phi_{k}^\prime(s)\D s \biggr)^{1/2} \\
		&= \sqrt{\varepsilon_k} \sqrt{\phi_k(t_{0,k} + \varepsilon_k) - \phi_k(t_{0,k} - \varepsilon_k)} \\
		&< \sqrt{\varepsilon_k \frac{\mass_k}{2}} = \sqrt{\frac{\varepsilon_k}{2k}}.
	\end{align*}
	Without loss of generality, we can assume that $\varepsilon_k \leq 1$ for all $k \in \N$. This allows us to conclude 
	\begin{align*}
		f_k(e^{(t_{0,k} + \varepsilon_k)/2}) = f_k(e^{(t_{0,k} - \varepsilon_k)/2}) + \bigl(f_k(e^{(t_{0,k} + \varepsilon_k)/2}) - f_k(e^{(t_{0,k} - \varepsilon_k)/2}) \bigr)
		< \frac{1}{\sqrt{k}} + \frac{1}{\sqrt{2k}} \rightarrow 0
	\end{align*}
	as $k \rightarrow \infty$.

	As for the second limit, 
	we use \eqref{eq:boundsphik}, which in particular implies $t_k^{(1/2)} > t_{0,k} + \varepsilon_k$, and the asymptotics of $D_k$ to compute
	 \begin{align*}
	f_k(e^{t_k^{(1/2)}/2})
	&= \frac{1}{\sqrt{2}}\int_{- \infty}^{t_k^{(1/2)}} \sqrt{\phi_{k}^\prime(s)}\D s 
	\geq  \frac{1}{\sqrt{2}}\int_{t_{0,k}+ \varepsilon_k}^{t_k^{(1/2)}} \sqrt{\phi_{k}^\prime(s)}\D s \\
	&= \frac{1}{\sqrt{2}}\int_{\phi_{k}(t_{0,k} + \varepsilon_k)}^{\phi_{k}(t_k^{(1/2)})} \sqrt{\phi_{k}^\prime} \frac{\D \phi_{k}}{\phi_{k}^\prime} 
	= \frac{1}{\sqrt{2}}\int_{\phi_{k}(t_{0,k} + \varepsilon_k)}^{\frac{\mass_k}{2}} \sqrt{\frac{\phi_{k}}{F_{D_k}(\phi_{k})}} \D \phi_{k} \\
	&\geq  \frac{1}{\sqrt{2}} \int_{\frac{7}{16}  \mass_k }^{\frac{\mass_k}{2} - \sqrt{D_k}} \sqrt{\frac{\phi_{k}}{(\phi_{k} - \frac{\mass_k}{2})^2 + D_k}} \D \phi_{k} 
	\geq  \frac{1}{\sqrt{2}} \int_{\frac{7}{16}  \mass_k}^{\frac{\mass_k}{2} - \sqrt{D_k}} \sqrt{\frac{\frac{7}{16}  \mass_k }{2(\phi_{k} - \frac{\mass_k}{2})^2}} \D \phi_{k} \\
	&= \frac{\sqrt{7  \mass_k}}{8} \int_{\frac{7}{16}  \mass_k}^{\frac{\mass_k}{2} - \sqrt{D_k}} {\frac{1}{\frac{\mass_k}{2} - \phi_{k} }} \D \phi_{k} 
	= - \frac{\sqrt{7  \mass_k}}{8} \biggl(\log \sqrt{D_k} - \log \biggl(\frac{\mass_k}{2} - \frac{7}{16}  \mass_k \biggr)\biggr) \\
	&= - \frac{\sqrt{7  \mass_k}}{8}\biggl(\log \frac{\sqrt{D_k}}{\mass_k} - \log \frac{1}{16}\biggr) 
	\sim  - \frac{\sqrt{7  \mass_k}}{8} \biggl(\log\frac{ \pi }{2 \lvert t_{0,k}\rvert} +\log 16 \biggr) \\
	&\sim  \frac{\sqrt{7  \mass_k}}{8}\log  \lvert t_{0,k}\rvert
	= \frac{\sqrt{7 k}}{8}
	\rightarrow \infty
\end{align*}
as $k \rightarrow \infty$. 
\end{proof}

We are now in a position to discuss the convergence of the constructed sequence.

\begin{lemma}
	If we take all basepoints to be the origin, the sequence $(g_k)_k$ converges in the pointed Gromov--Hausdorff sense to the half-line $[0, \infty)$ equipped with $ d_{[0, \infty)} \coloneqq d_{\eucl}\vert_{[0, \infty)}$,
	\begin{align*}
		(\C^2, d_{g_k}, 0) \stackrel{\mathrm{pGH}}{\longrightarrow} \bigl([0, \infty), d_{[0, \infty)}, 0\bigr).
	\end{align*}
\end{lemma}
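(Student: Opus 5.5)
The plan is to show that, for $k$ large, the $g_k$-distance is determined up to a small error by the radial function $f_k$ from \autoref{lemma:limitfk}. The comparison map will be $\Psi_k\colon \C^2 \to [0,\infty)$, $z \mapsto f_k(\lvert z\rvert)$ (with $\Psi_k(0)=0$). Pointed Gromov--Hausdorff convergence follows once we show: for every $R>0$, the restriction of $\Psi_k$ to $B^{g_k}_R(0)$ has distortion tending to $0$, and its image is $[0,R]$.

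\emph{Step 1: $d_{g_k}(0,z) = f_k(\lvert z \rvert)$.} Along a radial line $s\mapsto (s,0)$ we have $g_{1\bar 1} = e^{-t}\phi_k'$. With $t=\log r^2$ the line element becomes $\sqrt{2e^{-t}\phi_k'}\,\D r = \sqrt{\phi_k'/2}\,\D t$, so the radial segment from $0$ to $z$ has length exactly $f_k(\lvert z\rvert)$. Conversely, the same decomposition as in the proof of \autoref{lemma:diamEuclSphere} shows $g_k \geq \tfrac12\phi_k'(t)\,\D t^2$. Indeed, the $t$-direction is $g_k$-orthogonal to the spheres, and the tangential contribution is nonnegative because $\phi_k,\phi_k'>0$. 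Hence every curve from $0$ to $z$, and more generally between two spheres, has length at least the corresponding difference of $f_k$. This gives
\begin{align*}
	d_{g_k}(z,w) \geq \lvert f_k(\lvert z\rvert) - f_k(\lvert w\rvert)\rvert,
\end{align*}
so $\Psi_k$ is $1$-Lipschitz and $d_{g_k}(0,z)=\Psi_k(z)$. Since $f_k$ is continuous and increasing from $0$ to $\infty$, $\Psi_k$ maps $B^{g_k}_R(0)$ onto $[0,R]$.

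\emph{Step 2: upper bound on distances.} Let $z,w$ be given. Move from $z$ radially to the sphere $S^3_{\lvert w\rvert}$, then within that sphere to $w$. This yields
\begin{align*}
	d_{g_k}(z,w)\leq \lvert \Psi_k(z)-\Psi_k(w)\rvert + \diam_{g_k} S^3_{\lvert w\rvert}.
\end{align*}
The distortion on $B^{g_k}_R(0)$ is therefore bounded by $\sup\{\diam_{g_k} S^3_\sigma : f_k(\sigma)\le R\}$. By \autoref{lemma:diamEuclSphere}, this is at most $C\sup(\phi_k+\phi_k')^{1/2}$, where the supremum runs over the relevant $t=\log\sigma^2$.

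\emph{Step 3: uniform smallness of $\phi_k,\phi_k'$ (main obstacle).} By \autoref{lemma:limitfk}, $f_k(e^{t_k^{(1/2)}/2})\to\infty$. Hence for $k$ large every $\sigma$ with $f_k(\sigma)\le R$ satisfies $\log\sigma^2\le t_k^{(1/2)}$, and so $\phi_k\le \mass_k/2$ there. For $\phi_k'$ I would split into the three regimes.
\begin{itemize}[label={}]
	\item For $t\le t_{0,k}-\varepsilon_k$ we have $\phi_k'=\phi_k\le \mass_k$.
	\item On the transition interval, \autoref{lemma:bootstrapping} gives $\phi_k'<(1+\delta_k)q_{D_k}$, and $q_{D_k}\lesssim p_{D_k}\lesssim\mass_k$ by \autoref{lemma:c2Kähler}, since $D_k/\mass_k^2\to 0$.
	\item On $[t_{0,k}+\varepsilon_k, t_k^{(1/2)}]$ we use $\phi_k'=F_{D_k}(\phi_k)/\phi_k\le(\mass_k^2/4+D_k)/\phi_k$. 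This needs a lower bound $\phi_k\ge \phi_k(t_{0,k}+\varepsilon_k)\ge(1-\delta_k)p_{D_k}\ge c\,\mass_k$. That bound follows from $p_{D}\geq \mass/4+\sigma$ with $\lvert\sigma\rvert=\mass/48$, together with the monotonicity of $\phi_D$ in $D$ for $D$ between $D_k^\pm$.
\end{itemize}
Altogether $\phi_k+\phi_k'\le C\mass_k$, so the diameters are $O(\sqrt{\mass_k})=O(k^{-1/2})\to 0$. Thus $\Psi_k$ is an $\epsilon_k$-isometry on $B^{g_k}_R(0)$ onto $[0,R]$, with $\epsilon_k\to0$ and $\Psi_k(0)=0$. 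This is exactly pointed Gromov--Hausdorff convergence to $([0,\infty),d_{[0,\infty)},0)$.

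The delicate point is Step 3, specifically obtaining a lower bound on $\phi_k$ of order $\mass_k$ at $t_{0,k}+\varepsilon_k$ that is uniform in $k$. If the direct monotonicity argument is awkward, I would instead bound $\phi_k'\le \phi_k+\mass_k^2/(4\phi_k)$ and handle any portion where $\phi_k$ is tiny by the exact formula $\phi_k'=\phi_k$, which holds below the transition interval.
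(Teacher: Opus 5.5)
Your proposal is correct and is essentially the paper's proof. It uses the same comparison map $z\mapsto d_{g_k}(0,z)=f_k(\lvert z\rvert)$ and controls its distortion by the diameters of Euclidean spheres via \autoref{lemma:diamEuclSphere}. It then uses \autoref{lemma:limitfk} to restrict to $t\le t_k^{(1/2)}$, and bounds $\phi_k'=F_{D_k}(\phi_k)/\phi_k$ using a lower bound of order $\mass_k$ on $\phi_k(t_{0,k}+\varepsilon_k)$ obtained from monotonicity in $D$. The only difference is how you get uniformity: you bound $\phi_k+\phi_k'\le C\mass_k$ on all of $(-\infty,t_k^{(1/2)}]$, treating the exponential and transition regimes separately. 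The paper instead proves $d_k(\rho)\to 0$ for each fixed $\rho>0$ and upgrades this to uniform convergence on $[0,P]$ via a partition and the estimate $d_k(\rho)\le d_k(\rho')+2(\rho-\rho')$.
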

 
 \begin{proof}
 	For $P > 0$ and $k \in \N$, we define
 	\begin{align*}
 		h^P_k : \bigl(\overline{{B}^{g_k}_P(0)} , d_{g_k}\bigr) \rightarrow \bigl([0, \infty), d_{[0, \infty)}\bigr), \qquad z \mapsto r_k(z) \coloneqq d_{g_k}(0, z),
 	\end{align*}
 	which satisfies $h^P_k(0) = 0$ and $h^P_k(\overline{{B}^{g_k}_P(0)} ) = [0, P]$.
 	To prove the claim, it suffices to show that for each $P > 0$, the distortion of $h^P_k$ given by
 	\begin{align*}
 		\dis h^P_k 
 		= \sup_{z, w \in \overline{{B}^{g_k}_P(0)} } \big\vert \vert r_k(z) - r_k(w) \vert - d_{g_k}(z,w) \big\vert
 	\end{align*}
 	tends to $0$
 	as $k \rightarrow \infty$
 	(\cf \autocite[\Def 8.1.1]{BuragoMetricGeometry}).

 	By definition of $r_k$, the supremum in the preceding formula can be equivalently taken over $\overline{{B}^{g_k}_P(0)} \setminus \{0\}$.
 	Thus, let $z, w \in \overline{{B}^{g_k}_P(0)} \setminus \{0\}$ be arbitrary and 
 	pick $\tilde{w} \in (0, \infty) \cdot z$ such that $r_k(w)= r_k(\tilde{w})$. 
 	As the metrics are radially symmetric, rays from the origin are length-minimizing geodesics and $d_{g_k}(z, \tilde{w}) = \lvert r_k(z) -r_k(\tilde{w}) \rvert= \lvert r_k(z) -r_k({w})  \rvert$. Consequently, 
 	\begin{align*}
 		\big\vert \vert r_k(z) - r_k(w) \vert - d_{g_k}(z,w) \big\vert  \leq d_{g_k}(w, \tilde{w})
 	\end{align*}
 	and 
 	\begin{align*}
 		\dis h^P_k
 		= \sup_{\rho \in [0, P]} \diam_{g_k} \{z \in \C^2 \mid r_k(z) = \rho\}. 
 	\end{align*}
 	
 	Let $T \in \R$.
 	Because of the spherical symmetry of $g_{k}$, 
 	 the shortest path between the origin and $(e^{T/2},0) \in \C^2$ is $\gamma_T : [-\infty,T] \rightarrow \C^2, s \mapsto (e^{s/2},0)$. As $g_k$ takes the form
 	 \begin{align*}
 	 	\bigl((g_{k})_{\alpha \bar{\beta}}\bigr)_{\alpha, \beta} = e^{-t} \begin{pmatrix}
 	 		\phi_{k}^{\prime} & 0 \\ 0 & \phi_{k}
 	 	\end{pmatrix}
 	 \end{align*}
 	 on $\{z_2 = 0\}$, this path satisfies
 	\begin{align*}
 		g_{k}(\dot{\gamma}_T, \dot{\gamma}_T)(s) = \biggl(\frac{1}{2} e^{s/2}\biggr)^2 2 (g_{k})_{1 \bar{1}}\vert_{t = s} = \frac{1}{2} \phi_{k}^\prime(s),
 	\end{align*}
 	which yields
 	\begin{align*}
 		r_k\bigl((e^{T/2},0)\bigr)
 		&= \int_{- \infty}^{T} \sqrt{g_{k}(\dot{\gamma}_T, \dot{\gamma}_T)(s)} \D s
 		= \frac{1}{\sqrt{2}}\int_{- \infty}^{T} \sqrt{\phi_{k}^\prime(s)}\D s.
 	\end{align*}
 	More generally, we get
 	 \begin{align*}
 		r_k = f_k(r) = \frac{1}{\sqrt{2}} \int^{\log r^2}_{- \infty} \sqrt{\phi_k^\prime(s)} \D s
 	\end{align*}
 	by radial symmetry.

 	The function $f_k: (0, \infty) \rightarrow (0 , \infty)$ is increasing, bijective and satisfies $\lim_{r \rightarrow 0^+}f_k(r) = 0$. 
 	In particular, spheres centered at $0$ with respect to $g_k$ are also spheres with respect to $g_{\eucl}$ and vice versa:
 	\begin{align*}
 		\{z \in \C^2 \mid r_k(z) = \rho\} =  \{z \in \C^2 \mid r(z) = f_k^{-1}(\rho)\}.
 	\end{align*}
 	 	Therefore, \autoref{lemma:diamEuclSphere} yields
 	\begin{align*}
 		d_k(\rho) \coloneqq
 		\diam_{g_k} \{z \in \C^2 \mid r_k(z) = \rho\}
 		\leq \sqrt{\frac{65}{2} \pi^2 \phi_k \bigl(\log f_k^{-1}(\rho)^2\bigr)
 			+ 32 \pi^2 \phi_k^\prime\bigl(\log f_k^{-1}(\rho)^2\bigr)}
 	\end{align*}
 	for every $\rho > 0$.

	We now focus on a fixed $\rho > 0$. 
	Let $\sigma_k^\pm = \pm \frac{\mass_k}{48}$, $D_k^\pm$ and $\delta_k \in (0, \frac{1}{2})$ be as in the proof of \autoref{lemma:globalKähler} for
	$(\mass_k, t_{0,k}) = (\frac{1}{k}, -e^k)$, 
	and write $D_k^{\max} \coloneqq \max \{D_k^+, D_k^-\}$ and $\sigma_k^{\max} \in \{\sigma_k^+, \sigma_k^-\}$ for the corresponding parameter.
	According to \autoref{lemma:limitfk}, $\rho$ satisfies
 	\begin{align*}
 		f_k(e^{(t_{0,k} + \varepsilon_k)/2}) \leq \rho \leq f_k(e^{t_k^{(1/2)}/2}) \iff t_{0,k} + \varepsilon_k \leq \log f_k^{-1}(\rho)^2 \leq t_k^{(1/2)}
 	\end{align*}
 	for $k$ sufficiently large. Hence, 
 	\begin{align*}
 		0 < \phi_k \bigl(\log f_k^{-1}(\rho)^2\bigr) \leq \phi_k(t_k^{(1/2)}) = \frac{\mass_k}{2} = \frac{1}{2k} \rightarrow 0
 	\end{align*}
 	as $k \rightarrow \infty$,
 	and 
 	\begin{align*}
 		\phi_k \bigl(\log f_k^{-1}(\rho)^2\bigr) 
 		&\geq \phi_k(t_{0,k} + \varepsilon_k)
 		> \phi_{D_k^{\max}} (t_{0,k} + \varepsilon_k) 
 		> (1 - \delta_k) \biggl( \frac{D_k^{\max}}{\mass_k} + \frac{\mass_k}{4} + \sigma_k^{\max}\biggr) \\
 		&>  \frac{\mass_k}{2} \biggl( \frac{D_k^{\max}}{\mass_k^2} + \frac{11}{48}\biggr).
 	\end{align*}
 	This further implies
 	\begin{align*}
 		0 < \phi_k^\prime \bigl(\log f_k^{-1}(\rho)^2\bigr) 
 		&= \frac{F_{D_k}\bigl(\phi_k \bigl(\log f_k^{-1}(\rho)^2\bigr)\bigr)}{\phi_k \bigl(\log f_k^{-1}(\rho)^2\bigr)} 
 		= \phi_k \bigl(\log f_k^{-1}(\rho)^2\bigr) - \mass_k + \frac{\frac{\mass_k^2}{4} + D_k}{\phi_k \bigl(\log f_k^{-1}(\rho)^2\bigr)} \\*
 		&< - \frac{\mass_k}{2} + \frac{\frac{\mass_k}{4} +\frac{D_k^{\max}}{\mass_k}}{\frac{1}{2} \Bigl( \frac{D_k^{\max}}{\mass_k^2} + \frac{11}{48}\Bigr)}
 		\rightarrow 0
 	\end{align*}
 	as $k \rightarrow \infty$ because $ D_k^{\max} \sim\frac{\mass_k^2 \pi^2 }{4t_{0,k}^2}$ by \autoref{lemma:c2Kähler}.
 	As a result, $d_k(\rho)\rightarrow 0$ as $k \rightarrow \infty$.

 It remains to show that this convergence is uniform on $[0, P]$.
To see this, let $\varepsilon > 0$ and choose a partition $0 = \rho_0 < \rho_1 < \dots < \rho_{N+1} = P$ of $[0, P]$ such that $\rho_{l+1} - \rho_l < \frac{\varepsilon}{4}$. 
As $N$ is finite and $d_k(\rho_l)\rightarrow 0$ as $k \rightarrow \infty$ for each $l$, there exists $K \in \N$ such that $d_k(\rho_l) < \frac{\varepsilon}{2}$ for all $k \geq K$ and $l \in \{0, \dots, N+1\}$. 
On the other hand, we have $d_k(\rho) \leq d_k(\rho^\prime) + 2 (\rho - \rho^\prime)$ for all $0 \leq \rho^\prime \leq \rho$ by radial symmetry.
Combining this gives 
\begin{align*}
	\dis h^P_k
	&= \sup_{\rho \in [0, P]} d_k(\rho)
	= \max_{l \in \{0, \dots, N\}} \sup_{\rho \in [\rho_l, \rho_{l+1}]} d_k(\rho)
	\leq \max_{l \in \{0, \dots, N\}}\bigl( d_k(\rho_l) + 2 (\rho_{l+1} - \rho_l)\bigr) \\*
	&< \frac{\varepsilon}{2} + \frac{\varepsilon}{2} = \varepsilon
\end{align*}
for all $k \geq K$.
In other words, $\dis h^P_k \rightarrow 0$ as $k \rightarrow \infty$ for all $P > 0$, and, consequently, 
\begin{align*}
	(\C^2, d_{g_k}, 0) \stackrel{\mathrm{pGH}}{\longrightarrow} \bigl([0, \infty), d_{[0, \infty)}, 0\bigr),
\end{align*}
which concludes the proof.
 \end{proof}
 
 Since the pointed Gromov--Hausdorff limit is unique up to isometry (c.f., \eg \autocite[\Thm 8.1.7]{BuragoMetricGeometry}), the preceding lemma rules out convergence of $(\C^2, d_{g_k}, 0)$ to Euclidean space and thereby proves
 \autoref{thm:tentacleex}.

 \section{Sketch of the excised sets in \autoref{thm:Johan}}\label{sec:excision}
 
 In this section, we describe what the excised sets $Z_k$ in \autoref{thm:Johan} could look like for the sequence constructed above. 
 As we believe that a complete formalization of this part would not provide much additional insight, we rely partly on heuristics.
 
Set $z^* \coloneqq (e^{t^*/2}, 0) \in \C^2$ for some fixed $t^* \in \R$.
 For $\alpha > 0$ and $k \in \N$, we define $t_k^{(\alpha)} \coloneqq \phi_k^{-1}(\alpha \mass_k)$ and
 \begin{align*}
 	Z_k^{(\alpha)} \coloneqq \{z \in \C^2 \mid \phi_k(\log \lvert z \rvert^2) < \alpha \mass_k\}
 	= \{z \in \C^2 \mid \lvert z \rvert < e^{t_k^{(\alpha)} /2}\}.
 \end{align*}
 We will show that these sets satisfy 
  \begin{align*}
 	 	d_{g_k}(z^*, \partial Z_k^{(\alpha)}) =
 	 	d_{g_k}\bigl(z^*, (e^{t_k^{(\alpha)} /2}, 0)\bigr)
 	 	\rightarrow
 	 	\begin{cases}
 		 		\infty, & 0 < \alpha \leq \frac{1}{2},\\
 		 		d_\eucl(z^*, 0) = e^{t^*/2}, & \alpha > \frac{1}{2}.
 		 	\end{cases}
 	 \end{align*}
  Although this does not directly prove that $(\C^2 \setminus Z_k^{(\alpha)}, \hat{d}_{g_k})$ converges to Euclidean space in the pointed Gromov--Hausdorff sense whenever $\alpha > \frac{1}{2}$, it provides strong evidence for it and we consider it sufficient for our purposes.
  As for the area estimate, we will see that  
  \begin{align*}
  	\area_{g_k}(\partial Z_k^{(\alpha)}) \sim 4 \sqrt{2} \pi^2 \sqrt{\alpha} \biggl(\alpha - \frac{1}{2}\biggr) \mass_k^{3/2}
  \end{align*}
  for $\alpha > \frac{1}{2}$,
  which implies $\area_{g_k}(\partial Z_k^{(\alpha)}) \leq c_\alpha \m(g_k)^{3/2} $ for some constant $c_\alpha > 0$. Then for every function $\xi$ as in \autoref{thm:Johan}, there exists $k_\xi \in \N$ such that $\xi(\m(g_k)) \leq \frac{1}{c_\alpha}$ for all $k \geq k_\xi$, which verifies the claimed area estimate. 

In preparation for the proof, we first show that 
$\phi_k(t^*) > \frac{\mass_k}{2}$ for $k$ sufficiently large, and 
$\phi_k(t^*) \rightarrow \frac{1}{2}e^{t^*}$ as $k \rightarrow \infty$. 
  Let $K = K(t^*) \in \N$ be such that 
 $t^* \geq t_{0,k} + \varepsilon_k$ and $D_k \leq (\frac{1}{2}e^{t^*})^2$ for all $k \geq K$. 
If $\phi_{k}(t) = \frac{\mass_k}{2}$ for some $t \geq t^*$ and $k \geq K$, then \eqref{eq:implicitphir} yields the contradiction
  \begin{align*}
 	t + \log \frac{1}{2} = \frac{1}{2} \log D_k - \frac{\mass_k}{2 \sqrt{D_k}}  \frac{\pi}{2} < \log \sqrt{D_k} \leq 	t^* + \log \frac{1}{2}.
 \end{align*}
 Since $\phi_{k}$ is smooth and tends to $\infty$ as $t \rightarrow \infty$, this implies $\phi_{k}(t) > \frac{\mass_k}{2}$ for all $t \geq t^*$ and $k \geq K$. 
In the following, we always assume that the latter is satisfied. 
 
Formula \eqref{eq:implicitphir} also gives
 \begin{align*}
 		t^* + \log \frac{1}{2} \leq \frac{1}{2} \log F_{D_k}(\phi_k)
 		&\Longrightarrow \frac{1}{2} e^{t^*} \leq \sqrt{\biggl(\phi_k - \frac{\mass_k}{2}\biggr)^2 + D_k} \leq \phi_k - \frac{\mass_k}{2} + \sqrt{D_k} \\
 		&\Longrightarrow \phi_k(t^*) \geq \frac{1}{2} e^{t^*}  + \frac{\mass_k}{2} - \sqrt{D_k}
 \end{align*}
and 
\begin{align*}
	&t^* + \log \frac{1}{2} \geq \log \biggl(\phi_k - \frac{\mass_k}{2}\biggr) - \frac{\mass_k}{2 \bigl(\phi_k - \frac{\mass_k}{2} \bigr)} \\
	&\qquad \Longrightarrow \frac{\mass_k}{2 \bigl(\phi_k - \frac{\mass_k}{2} \bigr)} \geq \log \frac{\phi_k - \frac{\mass_k}{2}}{\frac{1}{2}e^{t^*}}
	\geq 1 -  \frac{\frac{1}{2}e^{t^*}}{\phi_k - \frac{\mass_k}{2}} \\
	&\qquad \Longrightarrow \phi_k(t^*) \leq \frac{1}{2} e^{t^*}  + \mass_k
\end{align*}
because $\arctan x - \frac{\pi}{2} = - \arctan \frac{1}{x} \geq - \frac{1}{x}$ and $\log x \geq 1 - \frac{1}{x}$ for all $x > 0$.
Therefore, 
\begin{align*}
	\frac{1}{2} e^{t^*}  
	\leq \liminf_{k \rightarrow \infty} \phi_k(t^*) 
	\leq \limsup_{k \rightarrow \infty} \phi_k(t^*) 
	\leq \frac{1}{2} e^{t^*}
\end{align*}
shows that $\phi_k(t^*) \rightarrow \frac{1}{2}e^{t^*}$ as $k \rightarrow \infty$.

Now suppose $0 < \alpha \leq \frac{1}{2}$. 
For $k$ sufficiently large, we have $\alpha \mass_k \leq \frac{\mass_k}{2} < \mass_k \leq \frac{1}{4}e^{t^*} \leq \phi_k(t^*)$ and $\phi_{k}(t_{0,k} + \varepsilon_k) < \frac{\mass_k}{2}$. Using this, we compute
\begin{align*}
	d_{g_k}\bigl(z^*, (e^{t_k^{(\alpha)} /2}, 0)\bigr)
	&= \frac{1}{\sqrt{2}}\int_{t_k^{(\alpha)} }^{t^*} \sqrt{\phi_{k}^\prime(s)}\D s 
	\geq \frac{1}{\sqrt{2}}\int_{t_k^{{(1/2)}} }^{t_k^{(1)}} \sqrt{\phi_{k}^\prime(s)}\D s \\
		&=  \frac{1}{\sqrt{2}} \int_{\frac{\mass_k}{2}}^{\mass_k} \sqrt{\frac{\phi_{k}}{F_{D_k}(\phi_{k})}} \D \phi_{k} 
	\geq \frac{\sqrt{\mass_k}}{2} \int_{\frac{\mass_k}{2}}^{\mass_k}\frac{1}{ \sqrt{(\phi_{k} - \frac{\mass_k}{2})^2 + D_k}} \D \phi_{k} \\
	&= \frac{\sqrt{\mass_k}}{2} \biggl( \log \biggl( \sqrt{\frac{\mass_k^2}{4} + D_k} + \frac{\mass_k}{2}\biggr) - \log \sqrt{D_k}\biggr) \\
	&\geq \frac{\sqrt{\mass_k}}{2} \log \frac{\mass_k}{\sqrt{D_k}}
	\sim \frac{\sqrt{\mass_k}}{2} \log \frac{2 \lvert t_{0,k} \rvert}{\pi} 
	\sim \frac{\sqrt{\mass_k}}{2} \log \lvert t_{0,k} \rvert \\
	&= {\frac{\sqrt{k}}{2}} \rightarrow \infty
\end{align*}
as $k \rightarrow \infty$. 

For the other case, fix $\alpha > \frac{1}{2}$ and let $k$ be sufficiently large such that $\phi_{k}(t_{0,k} + \varepsilon_k) < \frac{\mass_k}{2} < \alpha \mass_k \leq \frac{1}{4}e^{t^*} \leq \phi_k(t^*) \leq \frac{3}{4}e^{t^*}$. Then 
\begin{align*}
	d_{g_k}\bigl(z^*, (e^{t_k^{(\alpha)} /2}, 0)\bigr)
	&= \frac{1}{\sqrt{2}}\int_{t_k^{(\alpha)} }^{t^*} \sqrt{\phi_{k}^\prime(s)}\D s 
	= \frac{1}{\sqrt{2}}\int_{\alpha \mass_k}^{\phi_k(t^*)} \sqrt{\frac{\phi_{k}}{F_{D_k}(\phi_{k})}} \D \phi_{k} \\*
	&=  \frac{1}{\sqrt{2}}\int_0^\infty \mathbf{1}_{[\alpha \mass_k, \phi_k(t^*)]} \sqrt{\frac{\phi_{k}}{(\phi_{k} - \frac{\mass_k}{2})^2 + D_k}} \D \phi_{k}.
\end{align*}
The integrand in the last line converges pointwise to $\mathbf{1}_{[0, \frac{1}{2} e^{t^*}]} \phi_k^{- 1/2}$ as $k \rightarrow \infty$. On the other hand, we have 
\begin{align*}
	\sqrt{\frac{\phi_{k}}{(\phi_{k} - \frac{\mass_k}{2})^2 + D_k}}
	\leq  \frac{\sqrt{\phi_{k}}}{\phi_{k} - \frac{\mass_k}{2}}
	\leq \frac{\sqrt{\phi_{k}}}{\phi_{k} - \frac{\phi_k}{2 \alpha}}
	= \frac{2 \alpha}{2 \alpha - 1} \frac{1}{\sqrt{\phi_k}}
\end{align*}
for $\phi_k \geq \alpha \mass_k$, which yields the uniform bound 
\begin{align*}
	\mathbf{1}_{[\alpha \mass_k, \phi_k(t^*)]} \sqrt{\frac{\phi_{k}}{(\phi_{k} - \frac{\mass_k}{2})^2 + D_k}}
	\leq \mathbf{1}_{\left[0, \frac{3}{4} e^{t^*}\right]} \frac{2 \alpha}{2 \alpha - 1} \frac{1}{\sqrt{\phi_k}}.
\end{align*}
Since the right-hand side is integrable, dominated convergence gives the claimed limit,
\begin{align*}
	d_{g_k}\bigl(z^*, (e^{t_k^{(\alpha)} /2}, 0)\bigr)
	&\rightarrow \frac{1}{\sqrt{2}}\int_0^{\frac{1}{2}e^{t^*}} \frac{\D \phi_{k}}{\sqrt{\phi_k}} 
	= \frac{2}{\sqrt{2}} \sqrt{\frac{1}{2}e^{t^*}} = e^{t^*/2} = d_\eucl(z^*, 0).
\end{align*}

Finally, we turn to the area estimate. Let $z \in S^3_\rho$ for some $\rho > 0$.
Since $g_k(z,z) = 2 \phi_k^\prime(\log \rho^2)$ by \eqref{eq:coeffgk}, the volume form induced on $S^3_\rho$ by $g_k$ takes the form 
\begin{align*}
	\D A_{g_k} \vert_z 
	&= \frac{ z }{\sqrt{g_k(z,z)}} \lrcorner \D \vol_{g_k}
	= 4 e^{-2t} \phi_k \phi_k^\prime \frac{\rho}{\sqrt{2 \phi_k^\prime}} \biggl(\frac{z}{\rho} \lrcorner \D \vol_{\eucl}\biggr) \\
	&= 2 \sqrt{2} \rho^{-3} \phi_k \sqrt{\phi_k^\prime} \D A_\eucl \vert_z,
\end{align*}
	where $\phi_k$ and $\phi_k^\prime$ are evaluated at $t = \log \rho^2$.
	As a result, 
	\begin{align*}
		\area_{g_k} (S^3_\rho) = 2 \sqrt{2} \rho^{-3} \phi_k \sqrt{\phi_k^\prime} \area_{\eucl} (S^3_\rho) = 4 \sqrt{2} \pi^2 \phi_k \sqrt{\phi_k^\prime}.
	\end{align*}
Applying this to $\partial Z_k^{(\alpha)} $ for large $k$ so that $t_k^{(\alpha)} \geq t_{0,k} + \varepsilon_k$ yields 
\begin{align*}
	\area_{g_k}(\partial Z_k^{(\alpha)}) 
	&= 4 \sqrt{2} \pi^2 \phi_k (t_k^{(\alpha)})\sqrt{\phi_k^\prime (t_k^{(\alpha)})} 
	= 4 \sqrt{2} \pi^2  \alpha \mass_k \sqrt{\frac{\bigl(\alpha - \frac{1}{2}\bigr)^2 \mass_k^2 + D_k}{\alpha \mass_k}} \\
	&\sim 4 \sqrt{2} \pi^2  \sqrt{\alpha} \mass_k^{3/2} \sqrt{\biggl(\alpha - \frac{1}{2}\biggr)^2 + \frac{ \pi^2}{4 t_{0,k}^2}}
	\sim 4 \sqrt{2} \pi^2 \sqrt{\alpha} \biggl(\alpha - \frac{1}{2}\biggr) \mass_k^{3/2},
\end{align*}
which concludes the proof.

	\printbibliography	
	\noindent Mathematisches Institut, Universität Münster, Münster, Germany\\
	\emph{E-mail address}: \href{mailto:rmerkel@uni-muenster.de}{rmerkel@uni-muenster.de}

	\end{document}